\numberwithin{equation}{subsection}
\tikzset{degil/.style={
	decoration={markings,
	mark= at position 0.5 with {
	\node[transform shape] (tempnode) {$\backslash$};
	\draw[thick] (tempnode.north east) -- (tempnode.south west);
	}}, postaction={decorate}
}}
\newtheorem{thm}{Theorem}[section]
\newtheorem{lemma}[thm]{Lemma}
\newtheorem{prop}[thm]{Proposition}
\newtheorem{cor}[thm]{Corollary}
\theoremstyle{definition} 
\newtheorem{defin}[thm]{Definition}
\newtheorem{remark}[thm]{Remark}
\newtheorem{example}[thm]{Example}
\newcommand{\cE}{\mathscr E}
\newcommand{\cF}{\mathscr F}
\newcommand{\cG}{\mathscr G}
\newcommand{\cL}{\mathscr L}
\newcommand{\cO}{\mathscr O}
\newcommand{\cR}{\mathscr R}
\newcommand{\cS}{\mathscr S}
\newcommand{\cQ}{\mathscr Q}
\newcommand{\cT}{\mathscr T}
\newcommand{\cX}{\mathscr X}
\newcommand{\cY}{\mathscr Y}
\newcommand{\bb}[1]{\mathbf{#1}} 
\newcommand{\FF}{\bb{F}}
\newcommand{\NN}{\bb{N}}
\newcommand{\PP}{\bb{P}}
\newcommand{\RR}{\bb{R}}
\newcommand{\ZZ}{\bb{Z}}
\newcommand{\Gm}{\mathbf{G}_m}
\renewcommand{\phi}{\varphi}
\renewcommand{\leq}{\leqslant}
\renewcommand{\geq}{\geqslant}
\newcommand{\isom}{\simeq}
\newcommand{\fppf}{{\rm fppf}}
\newcommand{\cat}[1]{{\normalfont\textbf{#1}}}  
\newcommand{\ra}{\longrightarrow}    
\newcommand{\scalar}[1]{{\langle #1 \rangle}}
\newcommand{\Lie}[1]{\mathfrak{#1}}
\DeclareMathOperator{\Art}{\cat{Art}}
\DeclareMathOperator{\Sch}{\cat{Sch}}
\DeclareMathOperator{\Aut}{Aut}
\DeclareMathOperator{\Def}{Def}
\DeclareMathOperator{\Ext}{Ext}
\DeclareMathOperator{\Gr}{Gr}
\DeclareMathOperator{\Hom}{Hom}
\DeclareMathOperator{\Spec}{Spec}
\DeclareMathOperator{\GL}{GL}
\DeclareMathOperator{\PGL}{PGL}
\DeclareMathOperator{\SL}{SL}
\DeclareMathOperator{\SO}{SO}
\DeclareMathOperator{\Sp}{Sp}
\DeclareMathOperator{\GG}{\mathbb{G}}
\DeclareMathOperator{\Ob}{Ob}
\DeclareMathOperator{\Tan}{T}
\DeclareMathOperator{\Ker}{Ker}
\newcommand{\cExt}{{\mathscr E}\kern -.5pt xt}
\newcommand{\cHom}{\mathscr{H}\kern -.5pt om}
\newcommand{\cEnd}{{\mathscr E}\!nd}
\newcommand{\stacksproj}[1]{{\cite[Tag~\href{http://stacks.math.columbia.edu/tag/#1}{#1}]{stacks-project}}}
\newcommand{\maciek}[1]{}
\author[M. Zdanowicz]{Maciej Emilian Zdanowicz}
\address{\'Ecole Polytechnique F\'ed\'erale de Lausanne, Chair of Algebraic Geometry \newline 
  \indent MA C3 585 (Bâtiment MA), Station 8, CH-1015 Lausanne}
\email{maciej.zdanowicz@epfl.ch}
\title[Rigid schemes and equivariant bundles]{Arithmetically rigid schemes via deformation theory of equivariant vector bundles} \date{\today}
\subjclass[2010]{Primary 14G17, Secondary 14M17, 14M25, 14J45} 
\keywords{
    deformations, equivariant bundles, Frobenius lifting, rigidity}
\begin{document}

\begin{abstract}
We analyze the deformation theory of equivariant vector bundles.  In particular, we provide an effective criterion for verifying whether all infinitesimal deformations preserve the equivariant structure.  As an application, using rigidity of the Frobenius homomorphism of general linear groups, we prove that projectivizations of Frobenius pullbacks of tautological vector bundles on Grassmanians are arithmetically rigid, that is, do not lift over rings where $p \neq 0$.  This gives the same conclusion for Totaro's examples of Fano varieties violating Kodaira vanishing.  We also provide an alternative purely geometric proof of non-liftability mod $p^2$ and to characteristic zero of the Frobenius homomorphism of a reductive group of non-exceptional type.
\end{abstract} 

\maketitle


\section{Introduction}

The following paper is motivated by the will to understand deformation theoretic phenomena arising in characteristic $p>0$ algebraic geometry.  By the classical result of Deligne and Illusie \cite{deligne_illusie}, a natural source of interesting examples is given by varieties violating Kodaira vanishing.  More precisely, the authors prove that a smooth variety defined over a perfect field $k$ of characteristic $p>0$ does not lift over the ring of Witt vectors of length two (\emph{mod $p^2$} for short) if $\dim X < p$ and $X$ admits an ample line bundle $\cL$ such that $H^i(X,\omega_X \otimes \cL) \neq 0$, for some $i > 0$. 

In a recent paper \cite{totaro_failure}, Totaro constructed varieties violating Kodaira vanishing satisfying two additional peculiar properties.  Firstly, they are very specific geometrically -- the anticanonical bundle is ample.  Secondly, the dimension condition $\dim X < p$ is not satisfied, and therefore the deformation theoretic behaviour cannot be described using merely the results of \cite{deligne_illusie}.  The main goal of this paper is to develop appropriate tools to overcome this problem and derive some interesting consequences in deformation theory.

Let us first present the main technique in our toolkit.  The Totaro's varieties, described thoroughly in \S\ref{ss:intro_examples_totaro}, are constructed as projectivizations of Frobenius pullbacks of equivariant vector bundles on proper homogeneous spaces for the action of a reductive group.  Such bundles arise from representations of the corresponding parabolic subgroup and therefore it is natural to ask when the deformations of vector bundles preserve the equivariant structure, and consequently give rise to deformations of the representation.  The following general result gives a necessary criterion for this to happen. 
\begin{thm}{(Theorem~\ref{thm:def_equiv_vb})}
\label{thm:_intro_def_equiv_vb}
Let $\tilde G$ be a reductive group scheme over a complete local ring $R$ with residue field $k$.  Let $\tilde X/R$ be a scheme equipped with a $\tilde G$-action, and let $\cE$ be a $G$-equivariant vector bundle on $X = \tilde X \otimes k$.  Then the natural forgetful transformation of deformation functors $\Def^G_{\cE} \to \Def_{\cE}$ is smooth if the following conditions are satisfied:
\begin{enumerate}[i)]
   \item $H^1(X,\cEnd(\cE))$ is trivial as a $G$-module,
   \item $H^2\left(G,H^0(X,\cEnd(\cE))\right) = 0$.
 \end{enumerate} 
\end{thm}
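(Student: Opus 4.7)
The plan is to verify the standard smoothness criterion directly: for every small extension $A' \twoheadrightarrow A$ in the category of Artinian local $R$-algebras with residue field $k$, with kernel $I$ satisfying $I \cdot \mathfrak{m}_{A'} = 0$, and every pair $\bigl((\cE_A, \sigma_A), \cE_{A'}\bigr)$ consisting of a $G_A$-equivariant deformation together with a compatible lift $\cE_{A'}$ as a plain vector bundle, I will produce an equivariant structure $\sigma_{A'}$ on $\cE_{A'}$ reducing to $\sigma_A$. The construction splits naturally into two steps, each governed by one of the hypotheses.

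I begin by describing the equivariant structure as an isomorphism $\sigma : a^* \cE \isomto p_2^* \cE$ on $G \times X$, where $a, p_2 : G \times X \to X$ are the action and second projection. Using $\sigma_A$ to identify $a^* \cEnd(\cE_A) \cong p_2^* \cEnd(\cE_A)$, both $a^* \cE_{A'}$ and $p_2^* \cE_{A'}$ become deformations of the same bundle on $G_{A'} \times X_{A'}$, and the obstruction to extending $\sigma_A$ to an isomorphism $\sigma_{A'}$ on $A'$ will be their difference class
\[
a^* \gamma - p_2^* \gamma \;\in\; H^1(G \times X,\, p_2^* \cEnd(\cE)) \otimes_k I,
\]
where $\gamma \in H^1(X, \cEnd(\cE)) \otimes_k I$ classifies $\cE_{A'}$ as a deformation of $\cE_A$. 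Since $G$ is affine and flat over $R$, a Leray computation along $p_2$ yields $H^1(G \times X, p_2^* \cEnd(\cE)) \cong H^1(X, \cEnd(\cE)) \otimes_k \cO_G$, and a direct unwinding identifies the difference class with $\rho(\gamma) - \gamma \otimes 1$, where $\rho$ is the coaction of $\cO_G$ on the $G$-module $H^1(X, \cEnd(\cE))$. Hypothesis (i) renders $\rho$ trivial, so the obstruction vanishes and produces some bundle-level lift $\sigma_{A'}$ of $\sigma_A$.

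Having selected such $\sigma_{A'}$, I then measure its failure to satisfy the cocycle condition on $G \times G \times X$ by the automorphism
\[
c(\sigma_{A'}) \;:=\; (\partial_1^* \sigma_{A'})^{-1} \circ (\partial_0^* \sigma_{A'}) \circ (\partial_2^* \sigma_{A'})
\]
of the relevant triple pullback of $\cE_{A'}$, where $\partial_0, \partial_1, \partial_2 : G \times G \times X \to G \times X$ are the face maps of the action groupoid. Since $\sigma_A$ satisfies the cocycle, $c(\sigma_{A'})$ reduces to the identity modulo $I$, so it is encoded by a section of $\cEnd$ tensored with $I$; the analogous Leray identification presents this section as an element
\[
c \;\in\; H^0(X, \cEnd(\cE)) \otimes_k \cO_{G \times G} \otimes_k I,
\]
i.e., a $2$-cochain in the Hochschild complex computing $H^\bullet(G, H^0(X, \cEnd(\cE)))$. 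A routine simplicial check, using associativity of the action on $G^3 \times X$, shows that $c$ is a $2$-cocycle and that replacing $\sigma_{A'}$ by $(1 + \tau) \circ \sigma_{A'}$ for a $1$-cochain $\tau \in H^0(X, \cEnd(\cE)) \otimes_k \cO_G \otimes_k I$ alters $c$ exactly by the Hochschild differential $d\tau$. Hypothesis (ii) therefore forces $[c] = 0$ in $H^2(G, H^0(X, \cEnd(\cE))) \otimes_k I$, so some $\tau$ exists with $d\tau = c$, and the corrected isomorphism satisfies the cocycle condition, giving the desired equivariant lift $\sigma_{A'}$.

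The main technical obstacle I expect lies in the second step: verifying carefully that $c$ is a Hochschild $2$-cocycle (via the associativity identities on $G^3 \times X$) and that adjustment by $\tau$ alters it by the correct coboundary, so that the classes in $H^2(G, H^0(X, \cEnd(\cE)))$ are genuinely what controls the obstruction. The Leray identifications used throughout are unproblematic because the reductive group scheme $\tilde G / R$ is affine and flat, so higher direct images of structure sheaves vanish along the projections $G^n \times X \to X$, and the cohomology groups split as tensor products by flatness of $\cO_{G^n}$ over $k$.
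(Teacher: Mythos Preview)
Your argument is correct and takes a genuinely different route from the paper's. The paper proceeds more abstractly: it identifies $\Def^G_\cE$ with the deformation functor of $\cE$ viewed as a sheaf on the quotient stack $[X/G]$, so that the forgetful map is pullback along $X \to [X/G]$; it then invokes the Fantechi--Manetti criterion (surjective on tangent spaces, injective on obstruction spaces), where the relevant maps are $H^i([X/G],\cEnd(\cE)) \to H^i(X,\cEnd(\cE))$ for $i=1,2$. These are controlled by the seven-term exact sequence of the Cartan--Leray spectral sequence for $[X/G]\to BG\to \Spec k$, and the hypotheses (together with $H^1(G,k)=0$, which follows from reductivity) force the tangent map to be surjective and the kernel of the obstruction map to vanish.

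Your direct two-step cocycle construction is essentially the hands-on unwinding of the same spectral sequence---the obstruction in Step~1 is the $E_2^{0,1}$ contribution and that in Step~2 is the $E_2^{2,0}$ contribution---but carried out without any stack formalism or abstract obstruction theory. This buys you something: your proof uses reductivity only through the affineness of $G$ (for the K\"unneth-type identifications $H^i(G^n\times X,p^*\cF)\cong H^i(X,\cF)\otimes\cO_{G^n}$) and never invokes $H^1(G,k)=0$, whereas the paper's route through the obstruction space $H^2([X/G],\cEnd(\cE))$ picks up an extra $H^1\!\left(G,H^1(X,\cEnd(\cE))\right)$ term in the seven-term sequence, killed only by combining hypothesis~(i) with $H^1(G,k)=0$. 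In that sense your argument is slightly sharper. The paper's approach, on the other hand, makes the structural role of equivariant cohomology transparent and slots into a general framework. The simplicial verification you flag in Step~2 is indeed routine; the paper's Remark~\ref{rem:obs_descr} sketches exactly this interpretation of the $H^2(G,H^0)$ obstruction.
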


\noindent The conditions above have a natural interpretation.  The first one expresses the requirement that every $k[\epsilon]/\epsilon^2$-deformation, i.e., an element of the tangent space of the non-equivariant deformation functor, corresponding to an element of $H^1(X,\cEnd(\cE))$, needs to be preserved by the $G$-action.  The second condition concerns the obstruction classes to lifting the cocycle giving the equivariant structure (cf.~Remark~\ref{rem:obs_descr}). 

\subsection{Applications -- deformation theory of Totaro's examples}
\label{ss:intro_examples_totaro}

Our main application of the above result is to deformation theory of Totaro's varieties.  Let us now recall Totaro's construction.  From now on, by $k$ we denote an algebraically closed field of characteristic $p \geq 5$, and by $W(k)$ the associated ring of Witt vectors.  Let $N = p+2$, and let ${\pi \colon {\rm Fl}(1,2,N) \to {\rm Gr}(2,N)}$ be the natural projection from the partial flag variety to the Grassmanian of two-dimensional subspaces ${\rm Gr}(2,N)$.  The variety $X$ is defined by the Frobenius pullback of $\pi$, that is, by the cartesian diagram:
\[
\xymatrix{
  X \ar[r]\ar[d]\ar@{}[dr]|-{\square} & {\rm Fl}(1,2,N) \ar[d]^{\pi} \\
  {\rm Gr}(2,N) \ar[r]_(0.45){F} & {\rm Gr}(2,N).
  }
\]

\noindent Since ${\rm Fl}(1,2,N)$ is isomorphic to the projectivization $\PP_{{\rm Gr}(2,N)}(\cS)$ of the tautological vector bundle $\cS$ on ${\rm Gr}(2,N)$, the variety $X$ is in fact isomorphic to $\PP_{{\rm Gr}(2,N)}(F^*\cS)$.  By \cite[Theorem 2.1]{totaro_failure} we know that there exists a very ample divisor $A$ on $X$ satisfying the following properties:
\begin{enumerate}[i)]
  \item $-K_X = 2A$, \label{it1}
  \item $\chi(X,\cO_X(K_X + 3A)) < 0$, \label{it2}
  \item $H^i(X,\cO_X(A)) = 0$ for $i \geq 2$. \label{it3}
\end{enumerate}
In particular, the variety $X$ is Fano and violates the Kodaira vanishing theorem.  

We approach the deformation theory of Totaro's varieties by treating a more general problem of understanding the deformations of Frobenius pullbacks of tautological bundles on Grassmanians ${\rm Gr}(d,n)$, for $2 \leq d \leq n-2$.  For this purpose, we first use a simple argument to show that deformations of the projectivization in fact induces a deformation of the vector bundle.  We are then in the position to apply Theorem~\ref{thm:_intro_def_equiv_vb}.  Since we are working in characteristic $p>0$, the representation theory of reductive groups is fairly complicated, and therefore we need to apply the Borel--Bott vanishing substitutes proven by Andersen (see \S\ref{ss:cohomology_representation_thy} for the details).  In the end we see that deformations of the varieties in question induce deformations of the associated representation of the parabolic subgroup.  It turns out that the presence of the Frobenius pullback in the definition of the vector bundle implies that associated representation of the parabolic subgroup factors through the Frobenius homomorphism.  Consequently every deformation of the bundle yields a deformation of the Frobenius homomorphism of $\GL_d$, for $d \geq 2$.  The results described in the next section imply that the Frobenius of a reductive group of positive rank deforms only to rings where $p = 0$, and hence we obtain the following:

\begin{thm}[{Theorem~\ref{thm:defos_flag_varieties}}]
\label{thm:intro_defos_flag_varieties}
Let $\cS$ be the tautological vector bundle on the Grassmanian variety $\Gr(d,N)$, for $2 \leq d \leq N - 2$.  Then $\PP(F^*\cS)$ does not lift to any ring where $p \neq 0$.  In particular, the same conclusion holds for Totaro's examples.
\end{thm}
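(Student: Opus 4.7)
The plan is to reduce non-liftability of $Y := \PP(F^*\cS)$ to the rigidity of the Frobenius endomorphism of $\GL_d$, carried out in three reductions. Throughout, suppose for contradiction that $\tilde Y$ is a lift of $Y$ over some complete local ring $R$ with residue field $k$ in which $p$ is nonzero.

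First, I would upgrade $\tilde Y$ to a deformation of the vector bundle $F^*\cS$ itself. Since the Grassmannian $\Gr(d,N)$ is rigid — its tangent bundle has vanishing $H^1$ by a computation that survives in characteristic $p$ — the scheme $\tilde Y$ is necessarily a Brauer--Severi scheme over the constant deformation $\Gr(d,N)\times_k R$. Because the Brauer obstruction lies in $H^2\!\left(\Gr(d,N),\cO^\times\right)$, which vanishes as $\Gr(d,N)$ is rational with $\Pic = \ZZ$, this projective bundle is of the form $\PP(\tilde\cE)$ for some vector bundle $\tilde\cE$ deforming $F^*\cS$ (unique up to twist by a line bundle).

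Second, I would apply Theorem~\ref{thm:_intro_def_equiv_vb} with $\tilde G = \GL_N$ acting on $\Gr(d,N)$ in order to force $\tilde\cE$ to inherit a $\GL_N$-equivariant structure. This requires checking the two hypotheses: that $H^1\!\left(\Gr(d,N),\cEnd(F^*\cS)\right)$ is trivial as a $\GL_N$-module, and that $H^2\!\left(\GL_N, H^0(\Gr(d,N),\cEnd(F^*\cS))\right) = 0$. One uses the identification $\cEnd(F^*\cS) \cong F^*(\cS\otimes\cS^\vee)$, decomposes $\cS\otimes\cS^\vee$ into Schur summands, and applies Andersen's characteristic-$p$ substitutes for Borel--Bott--Weil recalled in \S\ref{ss:cohomology_representation_thy} to the resulting Frobenius-twisted induced $\GL_N$-modules.

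Third, a $\GL_N$-equivariant deformation of $F^*\cS$ on the homogeneous space $\GL_N/P$ corresponds, by the usual equivalence between equivariant sheaves and $P$-representations, to a deformation of the $P$-representation defining $F^*\cS$ — namely the standard $d$-dimensional representation of the Levi factor $\GL_d \subset P$ precomposed with Frobenius $F\colon\GL_d \to \GL_d$. Faithfulness of the standard representation then extracts from this datum an $R$-deformation of the group homomorphism $F\colon\GL_d\to\GL_d$ itself, contradicting the paper's separate rigidity theorem for Frobenius of reductive groups of positive rank (applicable since $d\ge 2$). The main obstacle will be the second step: establishing triviality of $H^1$ \emph{as a $\GL_N$-module} — not merely its vanishing as a vector space — requires careful summand-by-summand accounting via Andersen's results, since naive Bott vanishing breaks down in characteristic $p$ for the high-weight twists introduced by $F^*$.
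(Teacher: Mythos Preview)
Your three-step strategy is exactly the paper's: pass from $\PP(F^*\cS)$ to $F^*\cS$, then to an equivariant deformation via Theorem~\ref{thm:_intro_def_equiv_vb}, then to a deformation of $F_{\GL_d}$, contradicting Theorem~\ref{thm:defos_frob_red_gps}. Two points of execution differ from the paper and are worth flagging.

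For the first reduction, the paper does not invoke the Brauer group. Instead it proves directly (Proposition~\ref{prop:defs_projective_bundles}) that $\Def_{F^*\cS}\to\Def_{\PP(F^*\cS)}$ is smooth: one lifts $\cO_{\PP}(1)$ using $H^2(\cO)=0$, lifts the projection $\PP(F^*\cS)\to\Gr(d,N)$ via \cite{liedtke_satriano}, and then pushes $\cO(1)$ forward. Your Brauer-group route is morally equivalent, but note that you have skipped the step of producing the map $\tilde Y\to\Gr(d,N)\times_k R$ in the first place; the paper's argument handles this explicitly.

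For the second reduction, your plan to ``decompose $\cS\otimes\cS^\vee$ into Schur summands'' is not what the paper does, and in characteristic $p$ such a decomposition need not exist. The paper instead pulls $\cEnd(F^*\cS)$ back along $\GL_N/B\to\GL_N/P$ and uses the \emph{filtration} of $\pi^*\cEnd(F^*\cS)$ by the line bundles $\cL_{p(\ell_i-\ell_j)}$, $1\le i,j\le d$. This is essential because Andersen's results (Proposition~\ref{prop:jantzen_bwb}) are stated for line bundles on $G/B$, not for higher-rank bundles on $G/P$. The paper then runs a four-case analysis on the weights $p(\ell_i-\ell_j)$ --- the cases $i=j$, $i<j$, $i>j+1$, $i=j+1$ --- invoking Kempf vanishing and both parts of Proposition~\ref{prop:jantzen_bwb}; a small lemma (Lemma~\ref{lem:technical_proof_totaro}) then assembles the line-bundle conclusions into the required statement about $H^1$ of the filtered bundle. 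You correctly anticipated that this step is the crux, and that Andersen's results are the tool; just be aware that the reduction to line bundles goes via filtration on $G/B$, not via a direct-sum decomposition.
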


\noindent This describes the deformation theory of Totaro's examples, and moreover provides a bunch of new examples of arithmetically rigid schemes.  Such varieties are very rare.  First examples were constructed by Serre \cite[\S 8.6 and \S 8.7]{FGA}.  As proven by Ekedahl \cite{Ekedahl}, other examples are given by non-liftable Calabi--Yau varieties in \cite{Hirokado,Schroer}.  Furthermore, our results imply that the claim in Corollary 0.3 of \cite{bhatt} is true for every integer $n \geq 4$.



\subsection{Deformations of Frobenius -- another approach}
\label{ss:intro_description_other_approach}

An essential part of the above argument was the rigidity of the Frobenius homomorphism of a reductive group over a field of characteristic $p>0$.  This property can be derived from an intricate theory of reductive groups over arbitrary bases developed in \cite{SGA3III} (see \S\ref{sec:frobenius_group_schemes}).  We also note that non-liftability fo the Frobenius homomorphism of $\GL_N$ over $W$ was proven by Buium \cite[Corollary 4.116]{buium2} using his theory of $p$-differentials.  In the last part of this paper, we provide an alternative approach to this problem and prove that the Frobenius homomorphism of a reductive group of non-exceptional type lifts neither mod $p^2$ nor to characteristic zero.  We decided to include the argument in the paper, because it is of purely geometric nature and provide an insight which might be useful for considerations concerning other potentially non-reductive groups.  

In informal terms, our main observation is that a mod $p^2$ (resp. characteristic zero) lifting of the Frobenius homomorphism of a group scheme $G/k$ gives a natural lifting of the Frobenius pullback of every mod $p^2$ (resp. characteristic zero) liftable principal bundle of $G$.  For the general linear group $\GL_2$, identifying its principal bundles with rank two vector bundles, we obtain a contradiction with the results of Lauritzen--Rao \cite{lauritzen_rao} described in detail in Example~\ref{example:lauritzen_rao} (resp. with the results of Totaro).  


\subsection{Structure of the paper}

The paper is organized as follows.  In \S\ref{sec:preliminaries} we provide some preliminary results concerning main objects of our considerations.  Then, in \S\ref{sec:equivariant_sheaves} we analyse deformation theory of equivariant bundles, and in particular prove Theorem~\ref{thm:_intro_def_equiv_vb}.  Subsequently, in \S\ref{sec:preliminaries_reductive_groups} and \S\ref{sec:frobenius_group_schemes} we recall the general results concerning reductive group schemes over arbitrary bases, and then apply them to the deformation theory of Frobenius homomorphisms of reductive group schemes.  In \S\ref{sec:applications_totaro}, we combine previous results to prove Theorem~\ref{thm:intro_defos_flag_varieties}.  Finally, \S\ref{sec:another_proof_non_liftability} contains an alternative geometric proof of non-liftability of the Frobenius homomorphism of reductive groups.


\subsection{Acknowledgements}

I would like to thank Piotr Achinger, Joachim Jelisiejew, Łukasz Sien- kiewicz and Burt Totaro for helpful discussions concerning the content of the paper and many suggestions concerning the presentation.  Moreover, I am grateful to Prof. Alexandru Buium for his prompt reply to my e-mail question and a useful reference.  The work was supported by Zsolt Patakfalvi's Swiss National Science Foundation Grant No. $200021 / 169639$.


\subsection{Notation}

Throughout the paper, if not stated otherwise, $k$ is a perfect field of characteristic $p>0$ and $W(k)$ is the ring of Witt vectors of $k$.  We denote by $\Art_{W(k)}(k)$ the category of Artinian local $W(k)$-algebras with residue field $k$.  A surjection of rings in $\Art_{W(k)}(k)$ is called a \emph{small extension} if its kernel is of square zero.  For a $G$-action on a scheme $X$, by $[X/G]$ we denote the associated fppf quotient stack.  We also use the notation $BG_{S}$ for the special case of the classifying space $[S/G]$ of a group scheme $G$ over $S$, often omitting the subscript if it is clear from the context.  For any scheme $S$ by $\Sch_S$, we denote the category of $S$-schemes.


\section{Preliminaries}
\label{sec:preliminaries}

In this section, we present some preliminary results concerning the main subjects of our interest: Frobenius morphism, non-equicharacteristic deformation theory and group actions.  We also recall some basic properties of quotient stacks, since they are a handy tool in some of the following considerations.


\subsection{Frobenius morphism}
\label{sub:preliminaries_frobenius_and_deformations}

Let $p > 0$ be a prime number.  For every scheme $X$ defined over $\FF_p$ we consider the Frobenius morphism $F_X \colon X \to X$ defined as the identity on the level of topological spaces and the map of sheaves of rings $F_X^{\#} \colon \cO_X \to F_*\cO_X$ given by $f \mapsto f^p$.  For every morphism $\pi \colon X \to S$ of schemes over $\FF_p$, the associated Frobenius morphisms are compatible with $\pi$, which in turn yields a diagram

\[
\xymatrix{
  X \ar[rd]^{F_{X/S}}\ar@/_1.2pc/[rdd]_{\pi} \ar@/^1.2pc/[rrd]^{F_X}& & \\
   & X' \ar[r]\ar[d]_{\pi'}\ar@{}[dr]|-{\square} & X \ar[d]^{\pi} \\
   &  S \ar[r]_{F_S} & S,
}
\]
defining the Frobenius twist $\pi \colon X' \to S$ and the relative Frobenius morphism $F_{X/S} \colon X \to X'$.  We note that if $X$ is a group scheme over $S$ then the associated Frobenius morphism is in fact a homomorphism of groups schemes.


\subsection{Basics of deformation theory}
\label{sub:preliminaries_deformations}

Here, we recall the necessary tools from deformation theory.  For a general treatment of the topic we refer to \cite{schlessinger,hartshorne_deformation}.  
\begin{defin}
Let $X$ be a scheme over a perfect field $k$ of characteristic $p > 0$, and let $A \in \Art_{W(k)}(k)$ be a local Artinian $W(k)$-algebra with residue field $k$.  We say that a morphism $\pi \colon X_A \to \Spec A$ is an $A$-deformation (or an $A$-lifting if $p \neq 0$ in $A$) of $X$ if $\pi$ is flat and the diagram
\[
\xymatrix{
  X \ar[r]\ar[d]\ar@{}[dr]|-{\square} & X_A \ar[d]^{\pi} \\
  \Spec k \ar[r] & \Spec A
}
\]
is cartesian, that is, the special fiber $X_A \otimes_A k$ is isomorphic to $X$.  
\end{defin}

The data of all deformations of a certain geometric object can be conveniently described by a \emph{deformation functor}, that is, a covariant functor from the category $\Art_{W(k)}(k)$ to the category of sets satisfying certain technical conditions (\cite[Chapter 15]{hartshorne_deformation}).  In this paper, we shall be interested in the functors describing deformation of schemes and vector bundles potentially equipped with additional structure (e.g., equivariant structure).  The deformation functor of a scheme $X$ is defined by the association:

\[ 
  \Def_X \colon \Art_{W(k)}(k) \ra \cat{Sets}, \quad
  A \mapsto \genfrac{\{}{\}}{0pt}{} {\text{isom.\ classes of flat } X_A /\Spec A} {\text{with an identification } X_A \otimes_A k \isom X}.
\]

The set of all deformation functors forms a category with natural transformations as morphisms.  We say that a morphism of deformation functors $\cF \to \cG$ is \emph{smooth} if for every small extension $A' \to A$ the natural morphism $\cF(A') \to \cF(A) \times_{\cG(A)} \cG(A')$ induced by functoriality is surjective.  In particular, if $\cF \to \cG$ is smooth then for every $A \in \Art_{W(k)}(k)$ the map $\cF(A) \to \cG(A)$ is surjective too.  Note that this is consistent with standard notion of formal smoothness of morphism of schemes (see, e.g., \stacksproj{02H0}).   In the classical paper \cite[Proposition~3.10]{schlessinger} (see also \cite[Chapter 16]{hartshorne_deformation}), it is proven that for affine schemes with isolated singularities and projective schemes the above functor admits a hull, i.e., a smooth morphism from a functor $\Hom(R,-)$ for a complete $W(k)$-algebra $R$.   

\begin{remark}
In this paper, we shall often work with schemes admitting at most one lifting over every algebra $A \in \Art_{W(k)}(k)$.  We call such schemes \emph{infinitesimally rigid}.  An example of an infinitesimally rigid scheme is given by the projective space $\PP^n_{k}$, for every $n \geq 1$.
\end{remark}

Assume that $X$ admits a $W(k)$-lifting $\tilde X$.  Then for a given vector bundle $\cE$ on $X$, we consider a deformation functor of $\cE$ given by:
\[ 
  \Def_{\cE} \colon \Art_{W(k)}(k) \ra \cat{Sets}, \quad
  A \mapsto \genfrac{\{}{\}}{0pt}{} {\text{isom.\ classes of vector bundles } \cE_A \text{ on }\tilde X \otimes_{W(k)} A} {\text{together with an identification } \cE_{A|X} \isom \cE}.
\]
The functor depends on the choice of the lifting $\tilde X$ (e.g. for line bundles on K3 surfaces), but is clearly unambiguous for infinitesimally rigid schemes. 

In what follows we shall need a criterion for formal smoothness of a natural transformation of deformation functors.  It is based on two notions of tangent space and obstruction theory of the deformation functor $\cF$.  

\begin{defin}
The \emph{tangent} space $\Tan_{\cF}$ is defined by $\Tan_{\cF} = \cF(k[\varepsilon]/\varepsilon^2)$, and under the technical condition mentioned above -- satisfied in our context, admits a natural structure of a $k$-vector space.  The tangent space satisfies the following crucial property: for every small extension $A' \to A$ with kernel $I$ the morphism $\cF(A') \to \cF(A)$ is a pseudo-torsor under $I \otimes_k \Tan_{\cF}$.  
\end{defin}
\noindent For the explanation of the notion of obstruction theory, we refer to \cite[Section 3]{fantechi_manetti}.

\begin{lemma}[{\cite[Lemma 6.1]{fantechi_manetti}}]
\label{lem:smoothness_criterion_fantechi_manetti}
  Suppose $\left(\cF,(\Ob_\cF,\nu^\cF_e)\right)$ and $\left(\cG,(\Ob_\cG,\nu^\cG_e)\right)$ are deformation functors together with associated obstruction theories.  Let $\psi \colon \cF \to \cG$ be a morphism of functors admitting an obstruction map $\Ob_\psi \colon \Ob_\cF \to \Ob_\cG$. Then $\psi$ is smooth if the following conditions hold:
  \begin{enumerate}[i)]
    \item $\Tan_\psi \colon \Tan_\cF \to \Tan_\cG$ is surjective,
    \item $\Ob_\psi \colon \Ob_\cF \to \Ob_\cG$ is injective.
  \end{enumerate}
\end{lemma}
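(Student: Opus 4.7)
The plan is to verify smoothness directly from the definition. Fix a small extension $e \colon A' \twoheadrightarrow A$ in $\Art_{W(k)}(k)$ with kernel $I$ and an element $(\xi, \eta') \in \cF(A) \times_{\cG(A)} \cG(A')$, so that $\psi(\xi) = \eta'|_A \in \cG(A)$. The goal is to produce $\xi' \in \cF(A')$ with $\xi'|_A = \xi$ and $\psi(\xi') = \eta'$; the construction proceeds in two steps, one per hypothesis.

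First, I would use condition (ii) to show that $\xi$ lifts over $A'$ in the first place. By the axioms of an obstruction theory, there is a class $\nu_e^\cF(\xi) \in \Ob_\cF \otimes_k I$ which vanishes if and only if $\xi$ admits some lift to $\cF(A')$, and the compatibility of $\Ob_\psi$ with $\psi$ gives $(\Ob_\psi \otimes \mathrm{id}_I)(\nu_e^\cF(\xi)) = \nu_e^\cG(\psi(\xi))$. Since $\psi(\xi) = \eta'|_A$ extends to $\eta' \in \cG(A')$, the right-hand side vanishes, so by injectivity of $\Ob_\psi$ one obtains $\nu_e^\cF(\xi) = 0$. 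Choose any lift $\xi'' \in \cF(A')$ with $\xi''|_A = \xi$.

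Second, I would correct $\xi''$ using condition (i). The elements $\psi(\xi'')$ and $\eta'$ both lie in the fiber of $\cG(A') \to \cG(A)$ over $\psi(\xi)$, which, being non-empty, is a torsor under $\Tan_\cG \otimes_k I$. Hence there is a unique $\delta \in \Tan_\cG \otimes_k I$ with $\delta \cdot \psi(\xi'') = \eta'$. By surjectivity of $\Tan_\psi$, we can write $\delta = (\Tan_\psi \otimes \mathrm{id}_I)(\tilde\delta)$ for some $\tilde\delta \in \Tan_\cF \otimes_k I$. Setting $\xi' := \tilde\delta \cdot \xi''$, the compatibility of $\psi$ with the tangent torsor actions yields $\psi(\xi') = \delta \cdot \psi(\xi'') = \eta'$, while the action preserves the reduction modulo $I$, so $\xi'|_A = \xi''|_A = \xi$. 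This produces the required lift.

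The only real technical point is bookkeeping the compatibility axioms correctly. In the Fantechi--Manetti formalism, a morphism $\psi$ between deformation functors equipped with obstruction theories is not just a natural transformation: it must intertwine both the tangent torsor actions of $I \otimes_k \Tan$ on the fibers over small extensions and the obstruction assignments $\nu_e^\cF$, $\nu_e^\cG$. Once these compatibilities are unwound as above, the argument is formal; the only non-trivial inputs are the injectivity of $\Ob_\psi$ (used once, to realize a lift) and the surjectivity of $\Tan_\psi$ (used once, to choose the right lift).
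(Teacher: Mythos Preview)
Your argument is correct and is precisely the standard proof of this criterion. Note that the paper itself does not prove this lemma: it is stated with a citation to \cite[Lemma~6.1]{fantechi_manetti} and used as a black box, so there is no in-paper proof to compare against.
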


\begin{remark}
\label{remark:infinitesimally_rigid}
For a smooth projective scheme $X$ over $k$ the deformation functor described above admit obstruction theories satisfying (see \cite[Proposition 3.1.5, p. 248]{illusieI}):
\begin{enumerate}[i)]
  \item $\Tan_{\Def_X} \isom H^1(X,\cT_X)$ and $\Ob_{\Def_X} \isom H^2(X,\cT_X)$,
  \item $\Tan_{\Def_\cE} \isom H^1(X,\cEnd(\cE))$ and $\Ob_{\Def_\cE} \isom H^2(X,\cEnd(\cE))$.
\end{enumerate}
Consequently, the condition $H^1(X,\cT_X) = 0$ is sufficient and necessary for $X$ to be infinitesimally rigid.  In particular, by \cite[Th\'{e}oreme 2]{demazure} the Grassmanian $\Gr(k,N)$ is infinitesimally rigid, for every $k < N$.  Furthermore, the second set of conditions is true more generally for deformation of locally free modules on a ringed topos.  We shall use this property for sheaves on the quotient stack $[X/G]$.  Moreover, for every morphism $f \colon \cX \to \cY$ of ringed topoi flat over $W$ and a locally free sheaf $\cE$ on $Y = \cY \otimes k$, there exists a morphism of deformation functors $\Def_{\cE} \to \Def_{f^*E}$, induced by the pullback, whose tangent and obstruction maps are given by the natural maps $f^* \colon H^i(Y,\cEnd(\cE)) \to H^i(X,\cEnd(f^*\cE))$, for $i = 1,2$.
\end{remark}

\noindent We also need the following result whose proof is based on \cite[Proposition~2.2]{liedtke_satriano}.

\begin{prop}
\label{prop:defs_projective_bundles}
Assume that $X$ is an infinitesimally rigid scheme over $k$ such that $H^2(X,\cO_X) = 0$.  Then, for every vector bundle $\cE$, the natural morphism of deformation functors $\Def_{\cE} \to \Def_{\PP(\cE)}$ is smooth.
\end{prop}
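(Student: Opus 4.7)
The plan is to invoke the Fantechi--Manetti smoothness criterion (Lemma~\ref{lem:smoothness_criterion_fantechi_manetti}), so I would first identify the tangent and obstruction spaces of both deformation functors and then verify that the induced maps are respectively surjective and injective.  By Remark~\ref{remark:infinitesimally_rigid} and Illusie's theory, $\Def_{\cE}$ has tangent $H^1(X,\cEnd(\cE))$ and obstruction space $H^2(X,\cEnd(\cE))$, while for $Y := \PP(\cE)$ the standard theory gives tangent $H^1(Y,\cT_Y)$ and obstruction $H^2(Y,\cT_Y)$.

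The key computation compares these via the projection $\pi \colon Y \to X$.  I would use the relative tangent sequence
\[
  0 \ra \cT_{Y/X} \ra \cT_Y \ra \pi^*\cT_X \ra 0
\]
together with the classical identifications $\pi_*\cT_{Y/X} \isom \cEnd(\cE)/\cO_X$, $R^i\pi_*\cT_{Y/X} = 0$ for $i>0$, and $R\pi_*\pi^*\cT_X = \cT_X$.  The Leray spectral sequence then yields $H^i(Y,\cT_{Y/X}) \isom H^i(X,\cEnd(\cE)/\cO_X)$ and $H^i(Y,\pi^*\cT_X) \isom H^i(X,\cT_X)$.  Since $X$ is infinitesimally rigid, $H^1(X,\cT_X) = 0$, which gives the isomorphism $H^1(Y,\cT_Y) \isom H^1(X,\cEnd(\cE)/\cO_X)$ and an injection $H^2(X,\cEnd(\cE)/\cO_X) \hra H^2(Y,\cT_Y)$.

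Next I would bring in the short exact sequence of scalars
\[
  0 \ra \cO_X \ra \cEnd(\cE) \ra \cEnd(\cE)/\cO_X \ra 0,
\]
whose long exact sequence, combined with the assumption $H^2(X,\cO_X) = 0$, shows that $H^1(X,\cEnd(\cE)) \to H^1(X,\cEnd(\cE)/\cO_X)$ is surjective and $H^2(X,\cEnd(\cE)) \hra H^2(X,\cEnd(\cE)/\cO_X)$ is injective.  Stringing these together, the tangent map $H^1(X,\cEnd(\cE)) \to H^1(Y,\cT_Y)$ factors as a surjection followed by an isomorphism, while the obstruction map $H^2(X,\cEnd(\cE)) \to H^2(Y,\cT_Y)$ factors as a composition of two injections; Lemma~\ref{lem:smoothness_criterion_fantechi_manetti} then concludes the argument.

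The main obstacle is not the cohomology computation itself but verifying functoriality: one must check that the natural obstruction map induced by $\Def_{\cE} \to \Def_{\PP(\cE)}$ actually coincides with the composition described above, i.e.\ that the obstruction class to lifting $\cE$ maps, under pushforward along $\pi$ and the connecting morphism of the relative tangent sequence, to the obstruction class to lifting $\PP(\cE)$.  This is precisely where the cited Proposition~2.2 of \cite{liedtke_satriano} does the heavy lifting, presumably by realizing both obstructions through the (relative) Atiyah class and comparing them on the nose; alternatively, one can argue directly by constructing the projective lift of $\cE_{A'}$ over a small extension using the given lift $Y_{A'}$ and keeping track of the cocycles, which amounts to the same \v{C}ech-level computation.
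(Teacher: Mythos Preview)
Your approach via the Fantechi--Manetti criterion is a legitimate alternative, but it differs substantially from the paper's argument, which is direct and constructive.  The paper verifies smoothness straight from the definition: given a small extension $A' \to A$, a lift $\tilde\PP$ of $\PP(\cE)$ over $A'$ restricting to $\PP(\cE_A)$ over $A$, one first lifts $\cO_{\PP(\cE_A)}(1)$ to $\tilde\PP$ (this is where $H^2(X,\cO_X)=0$ enters, via $H^2(\PP(\cE),\cO_{\PP(\cE)})=0$), then invokes \cite[Proposition~2.2]{liedtke_satriano} to lift the \emph{projection} $\pi$ itself to $\tilde\pi\colon\tilde\PP\to\tilde X$, and finally sets $\cE_{A'} := \tilde\pi_*\cO_{\tilde\PP}(1)$, checking that the evaluation map $\tilde\pi^*\tilde\pi_*\cO_{\tilde\PP}(1)\to\cO_{\tilde\PP}(1)$ induces an isomorphism $\tilde\PP\isom\PP(\cE_{A'})$.

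What this buys is precisely the avoidance of your ``main obstacle'': there is no need to identify the abstract obstruction map of $\Def_{\cE}\to\Def_{\PP(\cE)}$ with any particular cohomological map, because the preimage is exhibited by hand.  Your cohomological computation is essentially sound (one minor slip: $H^1(Y,\cT_Y)$ is in general only a \emph{quotient} of $H^1(X,\cEnd(\cE)/\cO_X)$, since the connecting map from $H^0(X,\cT_X)$ need not vanish --- but surjectivity is all you need, so the conclusion survives).  The part you leave unresolved, namely the compatibility of the obstruction theories, is a genuine gap in your write-up as it stands; and your guess about the role of \cite[Proposition~2.2]{liedtke_satriano} is off --- in the paper it is used to deform the morphism $\pi$, not to compare Atiyah classes.
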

\begin{proof}
Let $A' \to A$ be a small extension of rings in $\Art_{W(k)}(k)$.  We need to prove that the natural map
\[
\Def_{\cE}(A') \to \Def_{\cE}(A) \times_{\Def_{\PP(\cE)}(A)} \Def_{\PP(\cE)}(A')
\]
is surjective.  For this purpose, we take an $A'$-deformation $\tilde \PP$ of $\PP(\cE)$ such that $\tilde \PP_A \isom \PP(\cE_A)$, for some deformation $\cE_A$ of the vector bundle $\cE$ over a unique $A$-deformation $X_A$.  By the assumption $H^2(X,\cO_X) = 0$, we easily see that $H^2\left(\PP(\cE),\cO_{\PP(\cE)}\right) = 0$ and therefore all line bundles deform on $\tilde \PP_A$.  In particular, the tautological line bundle $\cO_{\tilde \PP_A}(1)$ deforms to a line bundle $\cO_{\tilde \PP}(1)$.  Using \cite[Proposition~2.2]{liedtke_satriano}, we see that the morphism $\pi \colon \PP(\cE_A) \to X_A$ also lifts to $\tilde \pi \colon \tilde \PP \to \tilde X$.  Consequently, the natural surjective evaluation map 
\[
\tilde\pi^*\tilde\pi_*\cO_{\tilde \PP}(1) \to \cO_{\tilde \PP}(1)
\] 
yields a morphism $\tilde \PP \to \PP(\tilde\pi_*\cO_{\tilde \PP}(1))$ of flat schemes over $A'$, which restricts to the isomorphism over $A$, and is therefore an isomorphism.  This finishes the proof.
\end{proof}


\subsection{Groups actions, quotient stacks and equivariant sheaves}
\label{sub:preliminaries_stacks_equivariant_sheaves}

We now give a short recollection of basic facts concerning group actions, classifying and quotient stacks, and equivariant sheaves.  A reader familiar with all these notions can freely skip this section and proceed to the main part of the paper.  Throughout this section $X$ is an $S$-scheme equipped with an action of a flat $S$-group scheme $ G$ (not necessarily smooth).  We begin by recalling the definition of an equivariant bundle on $X$.  In this section, by $m \colon G \times_S X \to X$ we denote the action map of $G$ on $X$, and by $p_X \colon G \times_S X \to X$ the projection onto the second factor.
 
\begin{defin}
A \emph{$G$-equivariant structure} on a quasi-coherent sheaves $\cE$ is an isomorphism $\sigma \colon p_X^*\cE \isom m^*\cE$ satisfying the standard cocycle condition (see \stacksproj{043S}).  A $G$-equivariant bundle is a vector bundle together with a choice of a $G$-equivariant structure.
\end{defin}

\noindent In turns out that equivariant sheaves can be conveniently described as objects on the \emph{quotient stack} $[X/G]$.  The stack $[X/G]$ is defined as the category fibered in groupoids over $\Sch^{\fppf}_S$ with:

\begin{itemize}
\item objects given by diagrams
\[
\xymatrix{
  P \ar[d]\ar[r] & X \\
  T,
}
\]
where $T$ is an $S$-scheme, the morphism $P \to T$ is a $G$-principal bundle and $P \to X$ is an equivariant map,
\item morphisms given by $G$-bundle maps
\[
\xymatrix{
  P \ar[d]\ar[r]\ar@{}[dr]|-{\square} & P'\ar[d] \\
  T \ar[r] & T'
}
\]
compatible with $P \to X$ and $P' \to X$.
\end{itemize}

\noindent In particular, for every $T \in \Sch^{\fppf}_{S}$ the morphisms $T \to [X/G]$ are in one-to-one correspondence with diagrams as above.  In the special case $X = S$, we denote $[S/G]$ by $BG$.  By above considerations, $BG$ is a moduli stack of principal $G$-bundles, that is, the set of morphisms $T \to BG$ is naturally bijective with the set of isomorphism classes of principal $G$-bundles on $T$.  By \stacksproj{044O} the stack $[X/G]$ is in fact a quotient of $X$ be the groupoid:
\[
\xymatrix{
 G \times_S X \ar@<-.5ex>[rr]_{p_X} \ar@<.5ex>[rr]^m & & X \ar[r] & [X/G],
}
\]
and therefore, using {\stacksproj{06WT}}, we obtain the following characterization of equivariant sheaves.

\begin{prop}
\label{prop:equivariant_and_stacky}
The category of quasi-coherent $G$-equivariant sheaves on $X$ is equivalent to the category of quasi-coherent sheaves on the quotient stack $[X/G]$.
\end{prop}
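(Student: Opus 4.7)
The plan is to unwind the identification $[X/G] \simeq [X/(G \times_S X \rightrightarrows X)]$ recalled just before the statement and then invoke the general descent-theoretic description of quasi-coherent sheaves on a quotient stack.  The point is that the two pieces of data defining a $G$-equivariant quasi-coherent sheaf, namely the underlying sheaf $\cE$ on $X$ together with the isomorphism $\sigma \colon p_X^*\cE \isom m^*\cE$ satisfying the cocycle condition, are formally the same as giving a quasi-coherent sheaf on $X$ together with a descent datum with respect to the action groupoid.

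More concretely, first I would recall (or quote \stacksproj{06WT}) that if $(U,R,s,t,c)$ is a groupoid in algebraic spaces over $S$ with associated quotient stack $\cX = [U/R]$, then the category of quasi-coherent sheaves on $\cX$ is canonically equivalent to the category $\mathrm{QCoh}(U,R,s,t,c)$ of pairs $(\cF,\alpha)$ where $\cF$ is a quasi-coherent sheaf on $U$ and $\alpha \colon t^*\cF \isom s^*\cF$ is an isomorphism satisfying the cocycle condition with respect to the composition morphism $c$.  Second, I would specialize this to the action groupoid $R = G \times_S X \rightrightarrows X$, for which the source is $p_X$, the target is $m$, and the composition is induced by the group law of $G$; under these identifications the cocycle condition on $\alpha$ becomes exactly the cocycle condition in \stacksproj{043S} defining a $G$-equivariant structure.

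Finally, I would check that morphisms on both sides coincide: a morphism of equivariant sheaves is an $\cO_X$-linear map compatible with the isomorphisms $\sigma$, which is precisely what a morphism of descent data is, and these in turn are exactly morphisms of the corresponding sheaves on $[X/G]$.  I would also note that faithfully flat descent for the smooth (in fact fppf) atlas $X \to [X/G]$ provided by \stacksproj{044O} ensures that no extra conditions are needed.

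Since the bulk of the work has already been done in the Stacks Project, there is no real obstacle here; the only mild subtlety is bookkeeping to confirm that the source/target conventions for the action groupoid match the conventions in the definition of an equivariant structure, so that the two cocycle conditions become literally the same identity on $G \times_S G \times_S X$.  Once the conventions are aligned, the equivalence is immediate from the cited results.
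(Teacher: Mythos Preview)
Your proposal is correct and matches the paper's argument essentially verbatim: the paper simply cites \stacksproj{044O} to identify $[X/G]$ with the quotient by the action groupoid $G \times_S X \rightrightarrows X$ and then invokes \stacksproj{06WT} to obtain the equivalence. Your additional remarks on conventions and morphisms are correct but more detailed than what the paper actually records.
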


\begin{remark}
Motivated by the above result, throughout the paper we identify equivariant sheaves and associated sheaves on the quotient stack.  For an equivariant sheaf $\cE$ there is a natural notion of cohomology.  It is customary to refer to $H^i([X/G],\cE)$ as equivariant cohomology (more topologically oriented authors denote analogous groups by $H^i_G(X,\cE)$).   
\end{remark}

\begin{example}
If $G$ is a group scheme over a field $k$, then quasi-coherent sheaves on $[\Spec k/G]$ correspond to $G$-equivariant sheaves on $\Spec k$, that is, representations of $G$. 
\end{example}

\begin{cor}
\label{cor:equivariant_bundle_on_homogeneous_spaces}
Let $G$ be a flat group scheme over $S$, and let $P$ be a flat subgroup scheme.  Then the category of $G$-equivariant sheaves on the homogeneous space $G/P$ is equivalent to the category of $P$-representations. 
\end{cor}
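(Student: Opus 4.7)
The plan is to combine Proposition~\ref{prop:equivariant_and_stacky} with an identification of quotient stacks. By that proposition, $G$-equivariant quasi-coherent sheaves on $G/P$ correspond to quasi-coherent sheaves on the quotient stack $[(G/P)/G]$, while $P$-representations (i.e.\ $P$-equivariant quasi-coherent sheaves on $S$) correspond to quasi-coherent sheaves on $BP_{S} = [S/P]$. Hence the corollary reduces to exhibiting a natural isomorphism of fppf stacks
\[
  [(G/P)/G] \isomto BP.
\]

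To construct this isomorphism, I would use the key geometric input that the quotient map $q\colon G \to G/P$ is an fppf $P$-principal bundle (this uses flatness of $P$ in $G$ and the standing assumption that the homogeneous space exists), and that the left $G$-action on $G$ commutes with the right $P$-action, so that $q$ is $G$-equivariant for the left translation actions on source and target. Given a $T$-point of $[(G/P)/G]$, i.e.\ a $G$-principal bundle $E \to T$ together with a $G$-equivariant map $f \colon E \to G/P$, pulling back $q$ along $f$ yields a $P$-principal bundle $f^{*}G \to E$; the $G$-equivariance of $f$ together with freeness of the $G$-action on $E$ allows one to descend $f^{*}G$ to a $P$-principal bundle $Q \to T$, producing a $T$-point of $BP$. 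Conversely, starting from a $P$-principal bundle $Q \to T$, the associated bundle $E := G \times^{P} Q$ is naturally a $G$-principal bundle on $T$, and the $P$-invariant morphism $G \times Q \to G/P$, $(g,q)\mapsto gP$, descends to a $G$-equivariant map $E \to G/P$, giving a $T$-point of $[(G/P)/G]$. These assignments are quasi-inverse essentially because $G$ is the total space of the universal $P$-bundle over $G/P$.

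The main obstacle is the $2$-categorical bookkeeping — checking that the two constructions above are compatible with cocycle data and with pullback along morphisms $T' \to T$. A cleaner conceptual shortcut, which would sidestep most of this verification, is to observe that $G$ carries commuting actions of $G$ (by left translation) and $P$ (by right translation), and therefore a single action of $G \times P$. Since left multiplication by $G$ on itself is free and transitive, one has $[G/G] \isom S$; evaluating the iterated quotient $[G/(G \times P)]$ in the two possible orders then yields canonical equivalences
\[
  BP \;=\; [[G/G]/P] \;\isom\; [G/(G\times P)] \;\isom\; [[G/P]/G],
\]
by the standard fact that commuting group actions give rise to well-defined quotients in either order (cf.\ \stacksproj{044O}). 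Together with the two applications of Proposition~\ref{prop:equivariant_and_stacky}, this identification immediately yields the claimed equivalence of categories.
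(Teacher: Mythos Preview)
Your proposal is correct, and the ``cleaner conceptual shortcut'' you describe in the final paragraph is precisely the argument the paper gives: identify $[(G/P)/G]$ with the double quotient $[G/(G\times P)]$ via the commuting left $G$- and right $P$-actions on $G$, then take the quotients in the opposite order to obtain $[[G/G]/P] \cong BP_S$, and conclude by Proposition~\ref{prop:equivariant_and_stacky}. Your explicit description of the quasi-inverse functors on $T$-points is additional detail the paper omits, but it is not a different route, just an unpacking of the same equivalence.
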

\begin{proof}
By Proposition~\ref{prop:equivariant_and_stacky}, we need to describe the category of quasi-coherent sheaves on the quotient stack $[(G/P)/G]$.  Since the $P$-action on $G$ is free we see that the homogeneous space $G/P$ is isomorphic to the quotient stack $[G/P]$.  Consequently, the stack $[(G/P)/G]$ is isomorphic to the quotient of $G$ by the $G \times_S P$ action given by right multiplication by $G$ and left multiplication by $P$.  This quotient can be conducted in two steps: first dividing by $G$, and then by $P$.  This implies that 
\[
[(G/P)/G] \cong [G/G \times P] \cong [[G/G]/P] \cong BP_S, 
\]
and hence the proof is finished.
\end{proof}


\section{Deformations of equivariant sheaves}
\label{sec:equivariant_sheaves}

In this section, we analyze deformation theory of equivariant sheaves.  In particular, we introduce the functor of equivariant deformations of such sheaves and relate it to the standard functor of deformations ignoring the equivariant structure.  


\subsection{The Cartan--Leray spectral sequence}
\label{sub:equivariant_sheaves}

In order to compute the equivariant cohomology we shall use the following gadget coming from the realm of topology.  Every \emph{free} group action on a topological space gives rise to a Cartan--Leray spectral sequence relating cohomology groups of the space with the cohomology of the quotient.  Since the natural quotient map $X \to [X/G]$ is free it is natural to expect the existence of a similar sequence in the world of algebraic geometry.  Indeed, we have the following:

\begin{prop}
\label{prop:cartan_leray_spectral_seq}
Let $X/S$ be a scheme equipped with an action of a smooth groups scheme $G/S$.  Let $\cE$ be a $G$-equivariant sheaf (equivalently a sheaf on the quotient stack $[X/G]$) .  Then there exists a convergent $E_2$-page spectral sequence:
\[
E^{pq}_2 = H^p(G,H^q(X,\cE)) \Longrightarrow H^{p+q}([X/G],\cE).
\] 
\end{prop}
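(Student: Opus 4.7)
The plan is to realise the claimed sequence as the Leray spectral sequence for the natural morphism of stacks
\[
\pi \colon [X/G] \longrightarrow BG_S,
\]
induced by the structure map $X \to S$ and the trivial $G$-action on $S$.

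First I would record that there is a $2$-cartesian square
\[
\xymatrix{
X \ar[r]^{p} \ar[d] & [X/G] \ar[d]^{\pi} \\
S \ar[r]_{u} & BG_S,
}
\]
in which $p$ and $u$ are the tautological atlases, each of which is a $G$-torsor. The Leray spectral sequence for the $1$-morphism $\pi$ of ringed stacks, applied to the quasi-coherent sheaf on $[X/G]$ corresponding to $\cE$ via Proposition~\ref{prop:equivariant_and_stacky}, reads
\[
E_2^{pq} = H^p\bigl(BG_S,\, R^q\pi_*\cE\bigr) \Longrightarrow H^{p+q}([X/G], \cE).
\]

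Next I would identify the two pieces of this $E_2$-page with the expressions in the statement. Since $G/S$ is smooth, the atlas $u$ is smooth and faithfully flat, so flat base change for higher direct images along $u$ gives a canonical isomorphism $u^* R^q\pi_*\cE \cong H^q(X,\cE)$ of quasi-coherent sheaves on $S$; tracking the descent data along the $G$-torsor $u$ shows that, under the equivalence of Proposition~\ref{prop:equivariant_and_stacky} applied to the trivial $G$-space $X=S$, the sheaf $R^q\pi_*\cE$ on $BG_S$ corresponds precisely to the $G$-module $H^q(X,\cE)$. For the outer cohomology, the Čech nerve of the atlas $u$ is the bar simplicial scheme $G^{\times\bullet}_S$, and the associated Čech-to-derived-functor spectral sequence for a representation $V$ reduces to the standard bar complex computing $H^\bullet(G,V)$. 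Assembling these two identifications inside the Leray sequence above produces the spectral sequence of the proposition.

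The main obstacle I anticipate is making the two identifications rigorous in the stacky setting: flat base change for $R^q\pi_*$ along the smooth $1$-morphism $u\colon S \to BG_S$, and the coincidence of the quasi-coherent cohomology of $BG_S$ with the usual rational (Hochschild) cohomology of $G$. Both statements are standard in the theory of quasi-coherent sheaves on Artin stacks (cf.\ Olsson's book or the Stacks Project), and they rely in an essential way on the flatness and smoothness of $G/S$; once they are quoted, everything else is formal from the Leray spectral sequence of $\pi$.
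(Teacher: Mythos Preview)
Your proposal is correct and follows essentially the same route as the paper: both obtain the spectral sequence as the Leray (equivalently Grothendieck) spectral sequence for the factorisation $[X/G] \to BG_S \to S$. The only cosmetic difference is that the paper identifies $R^q\pi_*\cE$ with the $G$-module $H^q(X,\cE)$ via a $G$-equivariant Godement resolution, whereas you do it by flat base change along the atlas $S \to BG_S$; both are standard and yield the same result.
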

\begin{proof}
We apply the Leray spectral sequence (see \stacksproj{0734}) for the composition $[X/G] \to [\Spec(k)/G] \to \Spec k$.  The direct images via the first morphism can be computed using the $G$-equivariant Godement resolution.
\end{proof}

\begin{cor}
\label{cor:seven_term_Cartan}
Let $X/S$ be a scheme equipped with an action of a smooth groups scheme $G/S$.  Let $\cE$ be a $G$-equivariant sheaf (equivalently a sheaf on the quotient stack $[X/G]$).  Then there exists an exact sequence:
{\footnotesize
\begin{center}
\begin{tikzpicture}[descr/.style={fill=white,inner sep=1.5pt}]
        \matrix (m) [
            matrix of math nodes,
            row sep=1em,
            column sep=2.5em,
            text height=1.5ex, text depth=0.25ex
        ]
        { 0 & H^1(G,H^0(X,\cE)) & H^1([X/G],\cE) & H^1(X,\cE)^G \\
            & H^2(G,H^0(X,\cE)) & \Ker\left(H^2([X/G],\cE) \rightarrow H^2(X,\cE)\right) & H^1\left(G,H^1(X,\cE)\right), \\ };

        \path[overlay,->, font=\scriptsize,>=latex]
        (m-1-1) edge (m-1-2)
        (m-1-2) edge (m-1-3)
        (m-1-3) edge (m-1-4)
        (m-1-4) edge[out=355,in=175] (m-2-2) 
        (m-2-2) edge (m-2-3)
        (m-2-3) edge (m-2-4);
\end{tikzpicture}
\end{center}
}
\noindent where the maps $H^i([X/G],\cE) \ra H^i(X,\cE)$ in the sequence are the natural maps induced by the pullback.
\end{cor}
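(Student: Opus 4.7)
The plan is to derive the stated exact sequence directly from the Cartan--Leray spectral sequence of Proposition~\ref{prop:cartan_leray_spectral_seq} by reading off its low-degree terms. First, I would identify the relevant $E_2$ entries: $E_2^{1,0} = H^1(G, H^0(X, \cE))$, $E_2^{0,1} = H^1(X, \cE)^G$, $E_2^{2,0} = H^2(G, H^0(X, \cE))$, $E_2^{0,2} = H^2(X, \cE)^G$ and $E_2^{1,1} = H^1(G, H^1(X, \cE))$, which cover every group appearing in the claim.

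The first four arrows of the claim coincide with the standard five-term exact sequence attached to any first-quadrant spectral sequence,
$$0 \to E_2^{1,0} \to H^1([X/G], \cE) \to E_2^{0,1} \xrightarrow{d_2} E_2^{2,0} \to H^2([X/G], \cE).$$
To extend it, I would analyze the convergent filtration $F^{\bullet} H^2([X/G], \cE)$, whose graded pieces are $F^p/F^{p+1} \isom E_\infty^{p, 2-p}$. A bookkeeping of which $d_r$ can touch $E_r^{p,q}$ with $p + q \leq 2$ shows that for $r \geq 3$ the source or target of every such differential lies outside the first quadrant, so
$$E_\infty^{2,0} = \operatorname{coker}\bigl(d_2 \colon E_2^{0,1} \to E_2^{2,0}\bigr), \qquad E_\infty^{1,1} = \Ker\bigl(d_2 \colon E_2^{1,1} \to E_2^{3,0}\bigr), \qquad E_\infty^{0,2} \subseteq E_2^{0,2}.$$
Splicing the short exact sequence $0 \to E_\infty^{2,0} \to F^1 H^2 \to E_\infty^{1,1} \to 0$ onto the five-term sequence and composing with the inclusion $E_\infty^{1,1} \hookrightarrow E_2^{1,1}$ yields the six-term exact sequence in the statement, provided one identifies $F^1 H^2 = \Ker\bigl(H^2([X/G], \cE) \to H^2(X, \cE)\bigr)$.

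The only mildly non-formal step, which I expect to be the main obstacle, is this last identification together with the compatibility of the edge map with pullback. Concretely, for $i = 1, 2$ the edge map $H^i([X/G], \cE) \twoheadrightarrow E_\infty^{0,i} \hookrightarrow E_2^{0,i} = H^i(X, \cE)^G$, followed by the canonical inclusion $H^i(X, \cE)^G \hookrightarrow H^i(X, \cE)$, should coincide with the pullback along $X \to [X/G]$. This is a standard compatibility for the Leray spectral sequence associated to the composition $[X/G] \to [\Spec(k)/G] \to \Spec(k)$ that produces the Cartan--Leray sequence in Proposition~\ref{prop:cartan_leray_spectral_seq}, and once it is established, everything else is routine spectral-sequence bookkeeping.
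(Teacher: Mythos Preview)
Your proposal is correct and follows essentially the same approach as the paper: the paper's proof simply invokes ``the seven-term exact sequence associated with a spectral sequence'' for existence and appeals to the construction of the Cartan--Leray sequence for the identification of the maps with pullback. You have merely spelled out in detail what the paper leaves implicit, including the filtration analysis and the edge-map compatibility.
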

\begin{proof}  
For the existence of the sequence we apply the seven-term exact sequence associated with a spectral sequence.  The second claim follows from the construction of Cartan--Leray spectral sequence.
\end{proof}


\subsection{Deformation theory of equivariant sheaves}
\label{subsec:equiv_deform_sheaves}

If the scheme $\tilde X$ is equipped with an action over $W(k)$ of a group scheme $\tilde G$, and $\cE$ is a vector bundle on $X$ equivariant with respect to the action of $G = \tilde G \otimes k$ we may consider the functor of equivariant deformations of $\cE$: 

\[ 
  \Def^{G}_{\cE} \colon \Art_{W(k)}(k) \ra \cat{Sets}, \quad
  A \mapsto \left\{\begin{gathered} 
\text{isomorphism classes of vector bundles $\cE_A$ on $X_A$} \\
\text{equivariant with respect to $G_A = \tilde G \otimes_{W(k)} A$ } \\
\text{together with an equivariant identification $\cE_{A|X} \isom \cE$}
\end{gathered}\right\}.
\]

\noindent It turns out that the morphisms in the above spectral sequence describe the behaviour of the natural forgetful transformation $\Def^G_{\cE} \to \Def_{\cE}$.

\begin{thm}
\label{thm:def_equiv_vb}
Let $\tilde X/W(k)$ be a flat $\tilde G$-scheme and let $\cE$ be a $G$-equivariant vector bundle on $X$.  Then the natural forgetful transformation of deformation functors $\Def^G_{\cE} \to \Def_{\cE}$ is smooth if the following conditions are satisfied:
\begin{enumerate}[i)]
   \item $H^2\left(G,H^0(X,\cEnd(\cE))\right) = 0$,
   \item $H^1(X,\cEnd(\cE))$ is trivial as a $G$-module,
   \item $H^1(G,k) = 0$.  
 \end{enumerate} 
\end{thm}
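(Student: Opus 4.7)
The plan is to invoke the smoothness criterion of Lemma~\ref{lem:smoothness_criterion_fantechi_manetti} for $\psi \colon \Def^G_{\cE} \to \Def_{\cE}$, reducing the task to showing that $\Tan_\psi$ is surjective and $\Ob_\psi$ is injective.

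The first step is to identify these two maps concretely.  By Proposition~\ref{prop:equivariant_and_stacky}, a $G$-equivariant deformation of $\cE$ is the same datum as a deformation of $\cE$ viewed as a locally free sheaf on the ringed topos $[X/G]$.  The last part of Remark~\ref{remark:infinitesimally_rigid} then identifies
\[
\Tan_{\Def^G_{\cE}} \isom H^1([X/G],\cEnd(\cE)), \qquad \Ob_{\Def^G_{\cE}} \isom H^2([X/G],\cEnd(\cE)),
\]
while $\psi$ is induced by pullback along the quotient morphism $X \to [X/G]$, so $\Tan_\psi$ and $\Ob_\psi$ are precisely the natural pullback maps $H^i([X/G],\cEnd(\cE)) \to H^i(X,\cEnd(\cE))$ for $i = 1, 2$.

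The second step is to control these pullback maps with the Cartan--Leray seven-term sequence of Corollary~\ref{cor:seven_term_Cartan} applied to $\cEnd(\cE)$.  The image of $\Tan_\psi$ lies in $H^1(X,\cEnd(\cE))^G$ and equals the kernel of the differential into $H^2(G,H^0(X,\cEnd(\cE)))$, which is killed by hypothesis (i); and hypothesis (ii) promotes the invariants $H^1(X,\cEnd(\cE))^G$ to the full group $H^1(X,\cEnd(\cE))$, yielding surjectivity of $\Tan_\psi$.  For $\Ob_\psi$, the same sequence sandwiches $\Ker \Ob_\psi$ between $H^2(G,H^0(X,\cEnd(\cE)))$ and $H^1(G,H^1(X,\cEnd(\cE)))$.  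The left-hand term vanishes by (i); for the right-hand term, hypothesis (ii) lets me identify $H^1(X,\cEnd(\cE))$ with a direct sum of copies of the trivial $G$-module $k$, so
\[
H^1(G,H^1(X,\cEnd(\cE))) \isom H^1(G,k)^{\oplus \dim_k H^1(X,\cEnd(\cE))},
\]
which is zero by hypothesis (iii).  Hence $\Ker \Ob_\psi = 0$.

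The one point that needs care is the compatibility between the abstract obstruction theories of $\Def^G_{\cE}$ and $\Def_{\cE}$ and the pullback maps on cohomology of the topoi $[X/G]$ and $X$, so that the cohomological surjectivity and injectivity above genuinely translate into the hypotheses of Lemma~\ref{lem:smoothness_criterion_fantechi_manetti}.  This functoriality is exactly what is invoked at the end of Remark~\ref{remark:infinitesimally_rigid}; once it is in hand, the rest of the argument is a direct diagram chase in the Cartan--Leray sequence, with conditions (i)--(iii) calibrated so as to make the chase succeed.
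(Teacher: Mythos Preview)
Your proof is correct and follows essentially the same route as the paper: identify $\Def^G_{\cE}$ with deformations on $[X/G]$ via Proposition~\ref{prop:equivariant_and_stacky}, recognise $\Tan_\psi$ and $\Ob_\psi$ as the pullback maps $H^i([X/G],\cEnd(\cE)) \to H^i(X,\cEnd(\cE))$ via Remark~\ref{remark:infinitesimally_rigid}, and read off surjectivity and injectivity from the seven-term Cartan--Leray sequence of Corollary~\ref{cor:seven_term_Cartan}. Your write-up is in fact slightly more explicit than the paper's in spelling out why hypothesis~(iii) is needed, namely to pass from triviality of the $G$-module $H^1(X,\cEnd(\cE))$ to the vanishing of $H^1\bigl(G,H^1(X,\cEnd(\cE))\bigr)$.
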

\begin{proof}
Using Proposition~\ref{prop:equivariant_and_stacky}, we see that the transformation $\Def^G_{\cE} \to \Def_{\cE}$ is in fact induced by the system of pullback morphisms $\pi_A^*$ coming from the projections $\pi_A \colon X_A \to [X_A/G_A]$, for all $A \in \Art_{W(k)}(k)$.  By Remark~\ref{remark:infinitesimally_rigid}, the tangent and obstruction maps of the transformation can be identified with the pullbacks $H^i\left([X/G],\cEnd(\cE)\right) \to H^i\left(X,\cEnd(\cE)\right)$, for $i=1,2$ respectively.  By Corollary~\ref{cor:seven_term_Cartan}, those fit into an exact sequence
{\scriptsize 
\begin{center}
\begin{tikzpicture}[descr/.style={fill=white,inner sep=1.5pt}]
        \matrix (m) [
            matrix of math nodes,
            row sep=1em,
            column sep=2.5em,
            text height=1.5ex, text depth=0.25ex
        ]
        { 0 & H^1\left(G,H^0(X,\cEnd(\cE))\right) & H^1([X/G],\cEnd(\cE)) & H^1(X,\cEnd(\cE))^G \\
            & H^2\left(G,H^0(X,\cEnd(\cE))\right) & \Ker\left(H^2([X/G],\cEnd(\cE)) \rightarrow H^2(X,\cEnd(\cE))\right) & H^1\left(G,H^1(X,\cEnd(\cE))\right), \\ };

        \path[overlay,->, font=\scriptsize,>=latex]
        (m-1-1) edge (m-1-2)
        (m-1-2) edge (m-1-3)
        (m-1-3) edge (m-1-4)
        (m-1-4) edge[out=355,in=175] (m-2-2) 
        (m-2-2) edge (m-2-3)
        (m-2-3) edge (m-2-4);
\end{tikzpicture}
\end{center}
}

\noindent To conclude using Lemma~\ref{lem:smoothness_criterion_fantechi_manetti}, it is sufficient to prove that $H^1(X,\cEnd(\cE))$ is a trivial $G$-module and $H^1\left(G,H^1(X,\cEnd(\cE))\right) = H^2\left(G,H^0(X,\cEnd(\cE))\right) = 0$.  These statements follow directly from the assumptions.
\end{proof}

\begin{remark}\label{rem:obs_descr}
The above conditions have natural interpretations.  For example, the element in $H^2(G,H^0(X,\cEnd(\cE)))$ is an obstruction for the extension of the deformation of the equivariant structure.  In the case of $G$ a finite group we can describe it as follows.  The equvariant structure over $A \in \Art_{W(k)}(k)$ is given by a $1$-cocycle $\sigma \colon G \to \Aut(\cE_A)$.  An extension over $A' \in \Art_{W(k)}(k)$ of such structure is a lifting of $\sigma$ which satisfy the cocyle condition.  An obstruction for this is a $2$-cocycle in the tangent space (identified with $H^0(X,\cEnd(\cE))$) of the automorphism functor, that is, an element of $H^2(G,H^0(X,\cEnd(\cE)))$, as desired.
\end{remark}

\begin{remark}
We see that the conditions are necessary by a simple example of $\SL_2(k)$-equivariant bundle on $X = \PP^1_k$ equipped with the natural action.  Let $\cE = \cO \oplus \cO(2n)$, for $n \geq 2$.  The space $H^1(X,\cEnd(\cE))$ is isomorphic to the $\SL_2(k)$-representation 
\[
H^0(\PP^1,\cO(2n-2)) = k[x_0,x_1]_{2n-2},
\]
with no fixed points, and therefore no non-trivial deformation of $\cE$ admits an extension of the equivariant structure. Interestingly, the situation is different for the vector bundle $\cO \oplus \cO(2)$. 
\end{remark}


\subsection{Deformation theory of equivariant sheaves on homogeneous spaces}
\label{subsec:equiv_deform_sheaves_homogeneous_spaces}

In this section, we briefly analyze the functor of equivariant deformations in the special case of homogeneous spaces defined over $W(k)$.  Suppose $\tilde G$ is a smooth affine group scheme over $W(k)$, and $\tilde P \subset \tilde G$ is a parabolic subgroup scheme.  Let $\tilde X = \tilde G/\tilde P$ be the associated homogeneous space over $W(k)$, and let $\cF$ be a rank $k$ vector bundle on $X = \tilde X \otimes k$ equivariant with respect to the action of $G = \tilde G \otimes k$.  

By Corollary~\ref{cor:equivariant_bundle_on_homogeneous_spaces} we know that for every $A \in \Art_{W(k)}(k)$ the category of $G_A$-equivariant sheaves on $X_A = G_A/P_A$ is equivalent to the category of $P_A$-representations.  Representations of rank $k$ are in one-to-one correspondence with orbits of the conjugacy action of $\GL_{k,A}$ on the set of homomorphisms $\Hom(P_A,\GL_{k,A})$.  Taking $f \colon P \to \GL_k$ to be one of the homomorphisms corresponding to the sheaf $\cF$ on $G/P$, we obtain a natural transformation of deformation functors $\Def_f \ra \Def^{G}_{\cF}$ (cf. Remark~\ref{remark:deformation_functor_of_homomorphism}).  

\begin{lemma}
\label{lem:g_equivariant_deform_morphism}
For every $G$-equivariant sheaf $\cF$ on $G/P$ and every homomorphism $f \colon P \to \GL_k$ giving rise to $\cF$, the natural transformation $\Def_f \ra \Def^{G}_{\cF}$ is smooth.  In particular, every equivariant deformation of $\cF$ yields a deformation of the associated homomorphism.
\end{lemma}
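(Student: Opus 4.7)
The plan is to verify smoothness of $\Def_f \to \Def^G_\cF$ directly from the definition: given a small extension $A' \to A$ in $\Art_{W(k)}(k)$ and a pair $(f_A, \cF_{A'}) \in \Def_f(A) \times_{\Def^G_\cF(A)} \Def^G_\cF(A')$, I must construct a deformation $f_{A'} \in \Def_f(A')$ restricting to $f_A$ and whose associated equivariant sheaf is $\cF_{A'}$.

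First I would translate $\cF_{A'}$ into representation-theoretic data. Applying Corollary~\ref{cor:equivariant_bundle_on_homogeneous_spaces} over $A'$, the equivariant deformation $\cF_{A'}$ corresponds to an isomorphism class of $P_{A'}$-representations of rank $k$; fix a representative $\rho_{A'} \colon P_{A'} \to \GL_{k,A'}$. Its restriction $\rho_{A'}|_{P_A}$ and the homomorphism $f_A$ both correspond to the reduction $\cF_A$ over $A$, so there exists $g_A \in \GL_k(A)$ with $\rho_{A'}|_{P_A} = g_A f_A g_A^{-1}$. Using that $\GL_{k,W(k)}$ is a smooth affine group scheme, I may conjugate $\rho_{A'}$ in advance by a lift of its closed-fiber conjugator to arrange that $\rho_{A'}$ reduces to $f$ on the closed fiber, and subsequently replace $g_A$ by a product with a lift of its reduction so that $g_A$ itself reduces to the identity.

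Next I would use smoothness of $\GL_{k,W(k)}$ once more to lift $g_A$ to an element $g_{A'} \in \GL_k(A')$ whose reduction is the identity, and define $f_{A'} := g_{A'}^{-1} \rho_{A'} g_{A'}$. The resulting homomorphism $P_{A'} \to \GL_{k,A'}$ reduces to $f$ modulo the maximal ideal, so it defines a deformation of $f$; its restriction to $A$ equals $g_A^{-1} \rho_{A'}|_{P_A} g_A = f_A$; and being $\GL_k(A')$-conjugate to $\rho_{A'}$, it yields the same isomorphism class of $P_{A'}$-representation and therefore the same equivariant sheaf $\cF_{A'}$ under Corollary~\ref{cor:equivariant_bundle_on_homogeneous_spaces}. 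This supplies the required lift and completes the smoothness check.

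The "in particular" clause then follows from smoothness by induction on the length of $A$, starting from the trivially surjective map $\Def_f(k) \to \Def^G_\cF(k)$ and applying the smoothness property along a composition series of small extensions. The only delicate step I expect is the bookkeeping surrounding the conjugation equivalence: one has to verify that the equivalence of Corollary~\ref{cor:equivariant_bundle_on_homogeneous_spaces} identifies "isomorphism of equivariant sheaves" on $G/P$ with "inner conjugation on the target $\GL_k$" at the level of classifying homomorphisms, and that all adjustments of group elements needed above reduce to the identity on the closed fiber. Once these are made precise, every required lifting of elements of $\GL_k$ is supplied by smoothness of $\GL_{k,W(k)}$.
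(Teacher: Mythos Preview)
Your proposal is correct and follows the same approach as the paper's one-line proof (which simply invokes formal smoothness): the essential step is lifting the conjugating element $g_A \in \GL_k(A)$ to $g_{A'} \in \GL_k(A')$ using smoothness of $\GL_{k,W(k)}$, and then conjugating $\rho_{A'}$. Your preliminary normalizations (arranging that $\rho_{A'}$ reduces to $f$ on the closed fiber and that $g_A$ reduces to the identity) are harmless but unnecessary---once $g_{A'}$ lifts $g_A$, the conjugate $f_{A'} = g_{A'}^{-1}\rho_{A'}g_{A'}$ automatically restricts to $f_A$ over $A$ and hence to $f$ over the residue field.
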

\begin{proof}
Simple application of formal smoothness of $G$.
\end{proof}


\section{Reductive groups}
\label{sec:preliminaries_reductive_groups}

In this section, we recall standard result concerning reductive groups over arbitrary base.  As a reference we use the classical account from \cite{SGA3III}.

\begin{defin}[{\cite[Expos\'{e} XIX, D\'efinition 2.7]{SGA3III}}]
We say that an affine group scheme $G$ defined over on algebraically closed field is \emph{reductive} if its unipotent radical, i.e., maximal connected normal unipotent group is trivial.  Over a general base $S$, a group scheme $\cG \to S$ is reductive if it is affine and flat, and for every geometric point $\bar s$ the fibre $\cG_{\bar s}$ is reductive.
\end{defin}

\begin{example}
The examples most relevant in this paper are the standard linear groups $\SL_{N/\ZZ}$, $\Sp_{2n/\ZZ}$, $\SO_{N/\ZZ}$ and $\GL_{N/\ZZ}$ defined over the integers.  
\end{example}

\begin{defin}
A group scheme $G$ over a field is \emph{linearly reductive} if its category of representations is semi-simple, that is, for every representation $V$ and a subrepresentation $W \subset V$ there exists an invariant complement.
\end{defin}

\begin{example}
An example of linearly reductive groups is given by diagonalizable groups, that is, subgroups of a group of diagonal matrices.  For the definitions and a proof, see \cite[Expos\'e VIII]{SGA3III} or \cite[3.2.3 Theorem]{springer}.
\end{example}


\subsection{Root data}
\label{ss:reductive_groups_root_data}

We now recall the standard notion of root data.  The definition over an algebraically closed field is included in \cite[Section 7.4]{springer}.  For more general results, necessary in our context, we refer to \cite[Expos\'e XIX, XII]{SGA3III}. 

Let $S$ be a connected scheme, and let $G/S$ be a reductive group scheme.  Take $T \subset G$ to be a maximal torus over $S$, that is, a subtorus of $G$ such that for every geometric point $\overline{s} \to S$ the group $T_{\overline{s}} \subset G_{\overline{s}}$ is a maximal torus in the standard sense (see \cite[Expos\'e XII, D\'efinition 1.3]{SGA3III}).  Such torus exists \'etale locally on the base $S$.  The torus $T$ acts via conjugation on the Lie algebra $\Lie{g} = e^*\Omega^1_{G/S}$, where $e \colon S \to G$ is the identity section of $G$, decomposing it into a finite sum
\[
\Lie{g} = \Lie{t} \oplus \bigoplus_{\alpha \neq 0} \Lie{g}_{\alpha}
\] 
of eigenspaces $\Lie{g}_\alpha$ for characters $\alpha \in \Hom_{\rm grp/S}(T,\GG_{m/S})$, which are locally free sheaves on $S$.  The set of characters $\alpha$ appearing in the above decomposition is called the set of \emph{roots} of $G/S$ with respect to $T$.  We denote this set by $\cR(G,T)$.  It turns out that the functor 
\[
\cR \colon \Sch_{S} \to \cat{Sets} \quad S' \mapsto \cR(G_{S'},T_{S'})
\]
is representable by a closed and open subscheme $R$ of the scheme $\underline{\Hom}_{\rm grp/S}(T,\GG_{m/S})$.
Furthermore, by \cite[Expos\'e XXII, Th\'eorème 1.1]{SGA3III}, there exists a functorial choice of a set $\cR^\vee(G,T) \subset \Hom_{\rm grp/S}(\GG_{m/S},T)$, called the set of \emph{coroots}, along with a natural bijection 
\[
\cR(G,T) \to \cR^\vee(G,T) \quad \alpha \mapsto \alpha^\vee 
\] 
such that the pairing induced by composition
\[
\scalar{-,-} \colon \underline{\Hom}_{\rm grp/S}(T,\GG_{m/S}) \times \underline{\Hom}_{\rm grp/S}(\GG_{m/S},T) \to \underline{\Hom}_{\rm grp/S}(\GG_{m/S},\GG_{m/S}) = \ZZ_S.
\]
satisfies the condition $\scalar{\alpha,\alpha^\vee} = 2$.  The functor \[
\cR^\vee \colon \Sch_{S} \to \cat{Sets} \quad S' \mapsto \cR^\vee(G_{S'},T_{S'})
\]
is representable by a closed and open subscheme $R^\vee \subset \underline{\Hom}_{\rm grp/S}(\GG_{m/S},T)$, and together with the natural association $\alpha \to \alpha^\vee$ satisfies number of other conditions which make a tuple
\[
(X(T) = \underline{\Hom}_{\rm grp/S}(T,\GG_{m/S}),Y(T) = \underline{\Hom}_{\rm grp/S}(\GG_{m/S},T),R,R^\vee),
\]
a \emph{root data} for a pair $(G,T)$ over $S$.  We refer to \cite[Expos\'e XXII, D\'efinition 1.9]{SGA3III} for a precise definition, not necessary in our context.  

\subsection{Rigidification}
\label{ss:reductive_groups_rigidification}

\'Etale locally on the base $S$, the torus $T$ is trivial, i.e., there exists a torsion free $\ZZ$-module $M$ such that $T$ is isomorphic to the tori $D_S(M) = \Hom_{\ZZ}(M, \GG_{m/S})$.  In this situation, the root (resp. coroot) subscheme is isomorphic to the total space of a constant subsheaf of $M_S$ (resp. $M_S^\vee$) associated with a subset $R \subset M$ (resp. $R^\vee \subset M^\vee$).  Furthermore, also \'etale locally, the root eigenspaces $\Lie{g}_{\alpha}$ are in fact free $\cO_S$-modules.  If those conditions are satisfied on $S$, we say that the group scheme $G/S$ is \emph{rigidified} (fr. \emph{d\'eploy\'e}) with respect to $T$ (see \cite[Expos\'e XXII, D\'efinition 1.13]{SGA3III}).  There is a natural notion of a morphism of rigidified groups. 

\begin{defin}[{\cite[Expos\'e XXII \S 4]{SGA3III}}]
\label{def:rigidified_morphism}
Let $(G,T,M,R)$ and $(G',T',M',R')$ be two rigidified reductive groups over $S$ together with their root data.  A morphism of $f \colon G \to G'$ is \emph{rigidified} if the homomorphism $f_{|T}$ factors through a homomorphism of tori $D_S(h) \colon T \to T'$ induced by a morphism of lattices $h \colon M' \to M$ such that
\begin{center}\begin{minipage}{13cm}
there exists a bijection $d \colon R \to R'$ and a function $q \colon R \to \NN$ such that the association $x \mapsto x^{q(\alpha)}$ defines an endomorphism of $\GG_{a/S}$ and the relations
\[
h(d(\alpha)) = q(\alpha)\cdot \alpha \quad \quad h^t(\alpha^\vee) = q(\alpha)\cdot d(\alpha)^\vee
\]
are satisfied.
\end{minipage}\end{center}

\begin{remark}
\label{remark:function_q_and_char}
It is important to remark that the association $x \to x^q$ induces an endomorphism of $\GG_{a/S}$ if and only if $q = p^n$ for $n \geq 0$ and a prime number $p$ satisfies $p = 0 \in \cO_S$.  In particular, for schemes $S$ where no prime number is equal to zero the function $q \colon R \to \NN$ is necesarilly identically equal to one.
\end{remark}
\end{defin}

\noindent We use the above notion for isogenies of reductive groups.  We say that a morphism $G \to G'$ of reductive groups over $S$ is an \emph{isogeny} if it is faithfully flat and finite (see \cite[Expos\'e XXII, D\'efinition 4.9]{SGA3III}).  

\begin{example}
The natural quotient map $\SL_2 \to \PGL_2$ is a rigidified isogeny with kernel $\mu_2$.  The associated morphism of lattices is the multiplication by two and the function $q$ is identically one.
\end{example}

\begin{example}
Every reductive group over a field $k$ of characteristic $p>0$ admits a Frobenius endomorphism.  The associated morphism of lattices is the multiplication by $p$ and the function $q$ is identically $p$.  Note that the induced morphism of Lie algebras is zero.
\end{example}

\begin{prop}[{\cite[Expos\'e XXII, Corollaire 4.2.13]{SGA3III}}]
\label{prop:morphism_rigidified}
Every isogeny $G \to G'$ of reductive groups over $S$ is rigidified \'etale locally on $S$.  In particular, every isogeny of reductive groups is rigidified over a spectrum of an Artinian algebra.
\end{prop}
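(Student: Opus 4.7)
The plan is to construct, étale locally on $S$, compatible split maximal tori in $G$ and $G'$, and then analyse the behaviour of $f$ on the resulting root subgroups to extract the data $(h, d, q)$ required by Definition~\ref{def:rigidified_morphism}. First I would reduce to the case of compatible maximal tori. Since the functor of maximal tori of a reductive group scheme is representable by a smooth $S$-scheme, one can étale-locally choose $T \subset G$. Because $f$ is finite and faithfully flat, $f(T)$ is again a maximal torus (its geometric fibres are tori of the correct rank), and any two maximal tori of $G'$ are conjugate étale locally, so after a further étale localization I may assume $f(T) \subset T'$ for a prescribed $T' \subset G'$. A further étale base change then splits both tori, yielding $T \isom D_S(M)$ and $T' \isom D_S(M')$; Cartier duality realises $f|_T$ as a homomorphism of lattices $h \colon M' \to M$.

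The key step is the analysis of $f$ on root subgroups. The torus $T$ decomposes $\Lie{g}$ into free root eigenspaces $\Lie{g}_\alpha$, each exponentiating to a root subgroup $U_\alpha \subset G$ étale-locally isomorphic to $\GG_{a/S}$. Conjugation-equivariance of $f$, with $T$ acting on the source through $\alpha$ and on the target through $h$ pulled back from $T'$, forces $f(U_\alpha)$ to land inside a single target root subgroup $U_{\alpha'} \subset G'$, and the resulting homomorphism $\GG_{a/S} \to \GG_{a/S}$ must be $T$-homogeneous. Concretely, over a base on which $p$ is nilpotent, any group homomorphism $\GG_{a/S} \to \GG_{a/S}$ is a polynomial $\sum c_i X^{p^i}$, and the $T$-weights of its monomials are the distinct characters $p^i \alpha$; compatibility with the single target weight $h(\alpha')$ forces a single monomial $x \mapsto c \cdot x^q$ with $q = 1$ or $q = p^n$, in accord with Remark~\ref{remark:function_q_and_char}. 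Setting $d(\alpha) := \alpha'$ and $q(\alpha) := q$, the relation $h(d(\alpha)) = q(\alpha) \cdot \alpha$ is built into the construction. The dual identity $h^t(\alpha^\vee) = q(\alpha) \cdot d(\alpha)^\vee$ is then obtained by restricting $f$ to the rank-one semisimple subgroup generated by $U_\alpha$ and $U_{-\alpha}$ and comparing with its image, where coroots are pinned down uniquely by their pairing with roots.

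The main obstacle is the step confirming that $f$ truly sends each root subgroup into a \emph{single} target root subgroup as a monomial power map, rather than into a combination with several weights. This uses in an essential way that $f$ is an isogeny (surjective with finite kernel) together with the weight-homogeneity argument above to rule out mixed additive polynomials; the finiteness of the kernel guarantees that the chosen monomial has nonzero leading coefficient, so after adjusting the pinning of $U_{d(\alpha)}$ one really does obtain $x \mapsto x^{q(\alpha)}$. The second assertion of the proposition follows immediately from the first: a local Artinian ring with algebraically closed residue field, as in the setting of this paper, is henselian, so every étale cover of its spectrum admits a section, and the étale-local statement becomes global.
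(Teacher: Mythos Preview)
The paper gives no proof of this proposition; it is recorded purely as a citation to \cite[Expos\'e XXII, Corollaire 4.2.13]{SGA3III} and then used as a black box in Theorem~\ref{thm:defos_frob_red_gps}. Your outline is essentially the argument that SGA3 carries out, and it is correct in substance: after passing \'etale-locally to compatible split maximal tori (with $f(T)=T'$, equality rather than mere inclusion since both are maximal tori of the same rank), the image $f(U_\alpha)$ is a $T'$-stable one-dimensional smooth connected unipotent subgroup of $G'$, hence a single root subgroup $U_{d(\alpha)}$ by the structure theory of split reductive groups; $T$-equivariance then forces the additive polynomial $f|_{U_\alpha}$ to collapse to a single monomial $x\mapsto c\,x^{q(\alpha)}$, exactly as you say. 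One minor caveat on the ``in particular'' clause: your deduction via henselianity needs the residue field to be separably closed so that \'etale covers of $\Spec A$ admit sections, which the paper's statement does not literally impose. This is harmless in the applications (the paper takes $k$ algebraically closed from \S\ref{sec:applications_totaro} onward), and in any event the rigidification data $(h,d,q)$ is discrete, so once it exists \'etale-locally it is globally determined---which is what the proof of Theorem~\ref{thm:defos_frob_red_gps} actually invokes.
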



\section{Deformations of the Frobenius morphism of group schemes}
\label{sec:frobenius_group_schemes}

After the above preparation, we are ready to present an important part of the paper pertaining to the deformations of the Frobenius homomorphism.  More precisely, in this section, we introduce the deformation functor of the Frobenius homomorphism of a group scheme defined over the ring of Witt vectors of a perfect field $k$ of characteristic $p>0$.  Moreover, based on the previous section, we prove that for a reductive group the functor is restricted to rings where $p = 0$.


\subsection{General definitions}
\label{sub:def_frob_general_defs}

We start with a definition.  Let $\tilde G$ be a flat group scheme over $W(k)$, and let $\tilde{G'} = \tilde G \otimes_{W(k),\sigma} W(k)$ be the base change of $\tilde G$ along the Frobenius morphism $\sigma \colon W(k) \to W(k)$.  Moreover, for every $A \in \Art_{W(k)}(k)$, let $G_A = \tilde G \otimes_{W(k)} A$.  We say that the a morphism $F_A \colon G_A \to G'_A$ over $A$ is a \emph{lifting of the Frobenius homomorphism} if it is a homomorphism of group schemes over $A$ and its restriction $F_A \otimes k$ is equal to the relative Frobenius morphism.  The data of liftings of Frobenius homomorphism, for all $A \in \Art_{W(k)}(k)$, can be handily packed in the following deformation functor 
\[ 
  \Def_{F_{\tilde G}} \colon \Art_{W(k)}(k) \ra \cat{Sets}, \quad
  A \mapsto \left\{ \text{liftings of the Frobenius homomorphism over $A$} \right\}.
\]

%

\begin{example}
\label{example:frobenius_multiplicative_type}
Let $M$ be a finitely generated $\ZZ$-module.  For every scheme $S$, the diagonalizable group $D_M(S) = \Hom_{\ZZ}(M,\GG_{m/S})$ admits a multiplication by $p$ map induced by the multiplication by $p$ module endomorphism of $M$.  For $S = \Spec W(k)$ this morphism is a lifting of the Frobenius homomorphism of $D_m(k) = Hom_{\ZZ}(M,\GG_{m/k})$.  Since the scheme of endomorphism of a torus is discrete the deformations of Frobenius are trivial, and therefore $\Def_{F_{D_M}}$ is isomorphic to formal spectrum ${\rm Spf}(W)$. 
\end{example}

In this paper, we shall be mostly interested in the above functor considered for a deformation $\tilde G$ of a reductive group scheme $G$.  In this context, we emphasize that, by the existence and rigidity statements for redctive groups (see \cite[Expos\'e XXII, Corollaire 5.1]{SGA3III}) the definition of the deformation functor of the Frobenius morphism only depends on the group scheme $G$, and therefore we may consider a non-ambiguous deformation functor $\Def_{F_G}$.  We now apply the results of the previous section to describe this functor.

\begin{thm}\label{thm:defos_frob_red_gps}
Let $G/W$ be a reductive group scheme over $W$.  Then the functor $\Def_{F_G}$ satisfies the property $\Def_{F_G}(A) = \emptyset$, for every Artinian $W$-algebra $A$ such that $p \neq 0$ in $A$.
\end{thm}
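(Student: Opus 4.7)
The plan is to argue by contradiction using the rigidification machinery of \S\ref{ss:reductive_groups_rigidification}. Suppose that some $A \in \Art_{W(k)}(k)$ with $p \neq 0$ in $A$ admits a lift $F_A \colon G_A \to G'_A$ of the Frobenius homomorphism, and let us derive a contradiction by computing the function $q \colon R \to \NN$ attached to $F_A$ by rigidification in two incompatible ways.

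First I would verify that $F_A$ is automatically an isogeny, i.e.\ faithfully flat and finite. Indeed, $G_A$ and $G'_A$ are flat over the Artinian local base $A$, and the special fiber $F_A \otimes k$ is the relative Frobenius of $G \otimes k$, which is finite, faithfully flat and surjective. The fiberwise criterion for flatness over an Artinian local base, combined with the standard transfer of finiteness and surjectivity from the closed fiber to $A$ via Nakayama, upgrades all three properties to $F_A$ itself.

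Having this, Proposition~\ref{prop:morphism_rigidified} applies and equips $F_A$ with a rigidification with respect to some compatible maximal tori $T_A \subset G_A$ and $T'_A \subset G'_A$. By Definition~\ref{def:rigidified_morphism}, this rigidification attaches to $F_A$ a function $q \colon R \to \NN$ on the root system of $(G,T)$. By functoriality of the rigidification under base change to the residue field (the root data of $G_A$ and $G \otimes k$ agree canonically via the rigidity of maximal tori in reductive group schemes), this function must coincide with the one associated with the special fiber $F_A \otimes k$. Since the latter is the Frobenius homomorphism of $G \otimes k$, the second example following Definition~\ref{def:rigidified_morphism} identifies the resulting function with the constant $p$.

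On the other hand, since $p \neq 0$ in $A$ and every prime $\ell \neq p$ is already a unit in $W(k) \subset A$, no prime number vanishes in $A$. Remark~\ref{remark:function_q_and_char} therefore forces $q \equiv 1$. The equality $1 = p$ contradicts $p \geq 2$, which concludes the proof. The main technical subtlety is the compatibility of the rigidification function $q$ with base change along $A \to k$; this should follow directly from the functorial nature of the construction in \cite[Expos\'e XXII]{SGA3III} applied to the closed immersion $\Spec k \hra \Spec A$, but it is the one point that needs to be made rigorous before the combinatorial contradiction can be invoked.
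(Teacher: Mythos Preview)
Your argument is correct and follows essentially the same route as the paper: apply Proposition~\ref{prop:morphism_rigidified} to see that $F_A$ is rigidified, identify the function $q$ as the constant $p$ by comparison with the special fiber (the paper phrases this as ``the data of a rigidification is discrete and therefore uniquely defined by the Frobenius homomorphism at the special fibre''), and conclude via Remark~\ref{remark:function_q_and_char}. Your extra care in verifying that $F_A$ is an isogeny and in flagging the base-change compatibility of $q$ only makes explicit what the paper leaves implicit.
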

\begin{proof}
This is a direct corollary of Proposition~\ref{prop:morphism_rigidified}.  More precisely, we take a homomorphism $F_A \colon G_A \to G'_A$ over $A$ lifting the relative Frobenius.  It is clearly rigidified (see Definition~\ref{def:rigidified_morphism}).  Morever, the data of a ridification is discrete and therefore uniquely defined by the Frobenius homomorphism at the special fibre.  This implies that the function $q$ in the definiton of the rigidification is identically equal to $p$.  By Remark~\ref{remark:function_q_and_char} we consequently see that $p = 0$ in $A$, which finishes the proof.
\end{proof}

\begin{remark}
On the contrary, for an abelian variety $E/k$ the behaviour of deformation functors $\Def_{F_{\tilde E}}$ considerably depend on the choice of the lifting $\tilde E$.  For instance, taking $E$ to be an ordinary abelian variety \maciek{express some arithmetic condition, extend the remark with the content of de Jong complex multiplication}, by Serre--Tate theory there exists a \emph{unique} lifting $\tilde E/W(k)$ such that $F \colon E \to E$ lifts formally to $W(k)$.
\end{remark}

\begin{remark}
\label{remark:deformation_functor_of_homomorphism}
More generally, given two group schemes $\tilde G$ and $\tilde H$ over $W(k)$ together with a morphism $f \colon G \to H$, we may consider the deformation functor $\Def_{f}$ defined by the formula 
\[ 
  \Def^{\tilde G,\tilde H}_{f} \colon \Art_{W(k)}(k) \ra \cat{Sets}, \quad
  A \mapsto \left\{ \text{homomorphisms $f_A \colon G_A \to H_A$ restricting to $f$} \right\}.
\]
The Frobenius deformation functor is a particular instance $\Def^{\tilde G,\tilde G'}_{F_G}$ of this definition.
\end{remark}


\section{Frobenius twists of tautological bundles on flag varieties}
\label{sec:applications_totaro}

In this section we apply the results developed above to analyze the deformation theory of Frobenius twists of tautological bundles on Grassmanians.  As a corollary we obtain a description of deformation theory of recent Totaro's examples.  In this section, by $k$ we denote an algebraically closed field.


\subsection{Some results in representation theory}
\label{sub:representation_theory}

We begin with some preliminary results on representation theory of reductive groups.  We refer to \cite[Chapter II]{jantzen} for the detailed account on the general theory and the comprehensive presentation of the characteristic $p>0$ phenomena.  

We now extended the results on root data described in section \S\ref{sec:preliminaries_reductive_groups} with some basics of representation theory.  Let $T$ be a torus, that is, a group isomorphic to a product of finitely many copies of $\GG_{m}$ (note that we work over an algebraically closed field, and hence all tori are split).  We set $X(T) = \Hom(T,\Gm)$ to be the group of characters and $Y(T) = \Hom(\Gm,T)$ to be the group of cocharacters.  Now, we take $G$ to be a reductive group over $k$.  We fix a maximal torus $T \isom \Gm^n$ in $G$, and a Borel subgroup $B$ containing $T$, i.e., a maximal closed connected solvable subgroup.  Given this data, as in \S\ref{ss:reductive_groups_root_data}, we consider the adjoint action of $T$ on the Lie algebra $\Lie{g}$.  The action is diagonalizable, and hence yields a set $R \subset X(T)$ of non-zero weights, which is called the \emph{roots} of $G$.  The subset $R^+$ of roots appearing in the decomposition of $T$-action on $\Lie{b} \subset \Lie{g}$ is called the \emph{positive roots}.  There exists a unique basis $S \subset R^+$ of $X(T)\otimes \RR$ such that any other element of $R^+$ can be written as a sum of positive multiples of elements of $S$.  We refer to $S$ as \emph{simple roots}.

\begin{example}
\label{example:root_data_gl_N}
We now give a few details concerning the root data of the linear group $\GL_n$.  We choose a maximal torus $T \isom \Gm^n \subset \GL_n$ to be the group of diagonal invertible matrices.  The group $X(T)$ is a lattice with a basis given by characters $\{\ell_i\}_{i=1\ldots n} \subset X(T)$ defined by the formula $\ell_i\left(\mathrm{diag}(t_1,\ldots,t_n)\right) = t_i$.  Taking $B \subset \GL_n$ to be a Borel subgroup of upper triangular matrices, the root data is as follows:

\[
\begin{gathered}
\text{roots $R$: $\ell_i - \ell_j$ for $1 \leq i \neq j \leq n$,} \\ 
\text{positive roots $R^+$: $\ell_i - \ell_j$ for $1 \leq i < j \leq n$,} \\
\text{simple roots $S$: $\ell_i - \ell_{i+1}$ for $1 \leq i < n$,} \\
\text{half sum of positive roots: $\rho = \sum_{i=1}^n (n+1 - 2i)\ell_i$.}
\end{gathered}
\]

\noindent Under the identification of the root space $X(T)_{\RR} = X(T) \otimes \RR$ and the coroot space $Y(T)_{\RR} = Y(T) \otimes \RR$ induced by the scalar product $\scalar{-,-}$ on $X(T)_\RR$, determined by an orthonormal basis $\{\ell_i\}_{i=1\ldots n}$, we further have:

\[
\begin{gathered}
\text{coroots $R^\vee$: $(\ell_i - \ell_j)^\vee = \ell_i - \ell_j$.} \\
\end{gathered}
\]

\noindent The reflections and corresponding dot actions are explicitly given by the relations:
\[
\begin{aligned}
s_{\ell_i - \ell_{i+1}}(\ell_i) = \ell_{i+1} \quad\quad s_{\ell_i - \ell_{i+1}}(\ell_{i+1}) = \ell_i, \\
s_{\ell_i - \ell_{i+1}}(\ell_k) = \ell_k \text{, for $k \not\in \{i,i+1\}$} \\ 
s_{\ell_i - \ell_{i+1}} \cdot \lambda = s_{\ell_i - \ell_{i+1}}(\lambda) - (\ell_i - \ell_{i+1}).
\end{aligned}
\]
\end{example}

\subsection{Cohomology groups of equivariant sheaves}
\label{ss:cohomology_representation_thy}

For every character $\lambda \in X(T)$, we denote by $\cL_{\lambda}$ the line bundle on $G/B$ given by $G \times_B k_{\lambda}$, where $k_\lambda$ is a $B$ representation induced by the homomorphism $B \to T \to \GG_m$.  In order to apply Theorem~\ref{thm:def_equiv_vb} we need to control the behaviour of certain cohomology groups of equivariant bundles.  Unfortunately, the usual tool applied in this context --- Borel--Weil--Bott theorem --- fails in characteristic $p>0$.  As a substitute we shall use the following results due to Kempf and Andersen.  Before stating the result, we recall the definition of the dot action $\lambda \mapsto s_{\alpha}\cdot \lambda$.  For every simple root $\alpha \in S$, it is defined by the formula $s_{\alpha} \cdot \lambda = s_\alpha(\lambda + \rho) - \rho$, where $s_\alpha$ is the reflection associated with the root $\alpha$ and $\rho$ is the half sum of all positive roots.

\begin{prop}[{\cite[Proposition~II.4.5]{jantzen}}]
\label{prop:jantzen_kempf_vanishing}
Let $\lambda \in X(T)$ be a weight.  Then $H^0(G/B,\cL_{\lambda}) \neq 0$ if and only if $\lambda$ is dominant.  Moreover, if $\lambda$ is dominant then $H^i(G/B,\cL_{\lambda}) = 0$, for $i \geq 1$.
\end{prop}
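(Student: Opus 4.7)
The plan is to prove the two claims separately, using a representation-theoretic argument for the nonvanishing criterion and a Frobenius-splitting vanishing theorem for the second part.

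For the ``only if'' direction of the first statement, I would identify $H^0(G/B,\cL_\lambda)$ with the induced $G$-module $\mathrm{ind}_B^G(k_\lambda)$. Restricting $\cL_\lambda$ to the big cell $U^-\cdot eB \subset G/B$ and using the Bruhat decomposition, one checks that the $T$-weights of $\mathrm{ind}_B^G(k_\lambda)$ all lie in $\lambda - \sum_{\alpha_i \in S}\NN\alpha_i$, with $\lambda$ itself occurring as a weight (coming from the $T$-fixed point $eB$). Since the character of a finite-dimensional $G$-module is $W$-invariant, if $\lambda$ were not dominant---say $\scalar{\lambda,\alpha_i^\vee} < 0$ for some simple root $\alpha_i$---then $s_{\alpha_i}(\lambda) > \lambda$ would also be a weight, contradicting maximality of $\lambda$. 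Hence $\lambda$ must be dominant. The converse follows from the vanishing statement below combined with an Euler-characteristic computation via the Weyl character formula, or can be established directly by exhibiting a highest-weight section.

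For the vanishing assertion I would invoke the Frobenius-splitting method. The Mehta--Ramanathan theorem furnishes a Frobenius splitting $\phi \colon F_*\cO_{G/B} \to \cO_{G/B}$ compatible with the divisor $D$ equal to the sum of all codimension-one Schubert subvarieties; moreover, $\cO_{G/B}(D) \isom \cL_{2\rho}$. The standard consequence of such a compatible splitting (cf.\ the monograph of Brion--Kumar) is that $H^i(X,\cM) = 0$ for all $i>0$ whenever $\cM(D)$ is ample: iterating the inclusion of cohomology groups produced by $\phi$ yields an injection into the cohomology of an ample bundle, which vanishes by Serre. For dominant $\lambda$, the weight $\lambda + 2\rho$ is regular dominant, so $\cL_\lambda(D) \isom \cL_{\lambda+2\rho}$ is ample, and the desired vanishing follows. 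The main technical obstacle is the construction of the compatible Frobenius splitting itself: one must exhibit an explicit section of $\omega_{G/B}^{-1} \isom \cL_{2\rho}$ whose zero divisor recovers the union of Schubert divisors with the correct multiplicities, so that raising it to the $(p-1)$-st power and applying Cartier duality produces the splitting. Once this geometric input is in hand, both halves of the proposition are formal consequences.
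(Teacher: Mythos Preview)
The paper does not supply a proof of this proposition at all: it is stated with a bare citation to Jantzen \cite[Proposition~II.4.5]{jantzen} and used as a black box (Kempf vanishing) in the proof of Theorem~\ref{thm:defos_flag_varieties}. So there is no argument in the paper to compare your proposal against.

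Your outline is nonetheless a correct and standard route to the result. One small point of precision: the vanishing consequence you want does not follow from a mere \emph{compatible} splitting along $D$; what you need is that the splitting section of $\omega_{G/B}^{1-p}$ is the $(p-1)$-st power of a section of $\omega_{G/B}^{-1}\isom\cL_{2\rho}$ vanishing along $D$ (a so-called $(p-1)D$-splitting, or ``canonical'' splitting in the terminology of Brion--Kumar). That stronger hypothesis is what yields the injection $H^i(G/B,\cL_\lambda)\hookrightarrow H^i(G/B,\cL_\lambda^p\otimes\cO((p-1)D))$ and hence, after iteration and Serre vanishing, the desired result once $\cL_{\lambda+2\rho}$ is ample. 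You essentially say this in your last two sentences, but the phrase ``compatible splitting'' earlier is weaker than what your argument actually uses. With that terminological fix the sketch is sound; it is of course a different proof from the one in Jantzen's book, which proceeds by induction on the length of Weyl group elements via Demazure's $\SL_2$ fibrations rather than by Frobenius splitting.
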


\begin{prop}[{\cite[Proposition~II.5.15]{jantzen}}]
\label{prop:jantzen_bwb}
Let $\alpha$ be a simple root and $\lambda \in X(T)$ with $\scalar{\lambda,\alpha^\vee} > 0$.
\begin{enumerate}[a)]
  \item Suppose $\scalar{\lambda,\alpha^\vee} = ap^n - 1$, for $0 < a < p$ and $n \geq 0$. Then 
  \[
H^1(G/B,\cL_{s_\alpha \cdot \lambda}) \neq 0 \quad \iff \quad \lambda \text{ is dominant}.
\]
  \item Suppose $\scalar{\lambda,\alpha^\vee} = \sum_{j = 0}^n a_j p^j$ with $0 \leqslant a_j < p$ for all $j$ and $a_n \neq 0$.  Assume that there exists $j < n$ such that $a_j < p-1$.  Then
\[
H^1(G/B,\cL_{s_\alpha \cdot \lambda}) \neq 0 \quad \iff \quad s_\alpha \cdot \lambda + a_n p^n \alpha \text{ is dominant}.
\]
If the above condition holds and $\lambda$ is dominant, then $\lambda$ is the largest weight of $H^1(G/B,\cL_{s_\alpha \cdot \lambda})$.  Otherwise, let $m$ be minimal with $a_m < p-1$ and let $m' \geq m$ be minimal for the condition $\mu = s_\alpha \cdot \lambda + \sum_{j = m'}^n a_j p^j \alpha$ is dominant.  Then $\mu$ is the largest weight of $H^1(G/B,\cL_{s_\alpha \cdot \lambda})$. 
\end{enumerate}

\end{prop}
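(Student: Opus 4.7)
The plan is to fibre the computation over the partial flag variety $G/P_\alpha$, where $P_\alpha \supset B$ is the minimal parabolic associated with the simple root $\alpha$, so as to reduce to a rank-one calculation on the fibres $\PP^1 \isom P_\alpha/B$.  Let $\pi \colon G/B \to G/P_\alpha$ be the natural projection.  Since $\pi$ is smooth with one-dimensional fibres, only $R^0\pi_*$ and $R^1\pi_*$ can be non-zero, and the Leray spectral sequence collapses into a short exact sequence
\[
0 \ra H^1(G/P_\alpha, R^0\pi_* \cL_{s_\alpha \cdot \lambda}) \ra H^1(G/B, \cL_{s_\alpha \cdot \lambda}) \ra H^0(G/P_\alpha, R^1\pi_* \cL_{s_\alpha \cdot \lambda}) \ra 0.
\]
Since the restriction of $\cL_{s_\alpha \cdot \lambda}$ to a geometric fibre has weight $\scalar{s_\alpha \cdot \lambda, \alpha^\vee} = -\scalar{\lambda, \alpha^\vee} - 2 \leq -2$, the fibrewise $H^0$ vanishes, so $R^0\pi_* \cL_{s_\alpha\cdot\lambda} = 0$ and the problem reduces to computing $H^0(G/P_\alpha, V)$ for the $G$-equivariant vector bundle $V = R^1\pi_* \cL_{s_\alpha \cdot \lambda}$, whose fibre is a $P_\alpha$-representation via Corollary~\ref{cor:equivariant_bundle_on_homogeneous_spaces}.

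Next I would analyse $V$ as a representation of the Levi factor $L_\alpha \isom \GL_2$ of $P_\alpha$.  Serre duality on $\PP^1$ identifies the fibre of $V$ (up to a twist by a character of $T$ depending on $\lambda$) with the dual of a Weyl module for $L_\alpha$, and Steinberg's tensor product theorem controls its simple composition factors in terms of the $p$-adic digits $a_j$ of $\scalar{\lambda,\alpha^\vee}$.  In case (a), where $\scalar{\lambda,\alpha^\vee} + 1 = ap^n$ is of ``pure'' form, Steinberg's theorem forces $V$ to be a simple $L_\alpha$-module whose highest $T$-weight, after accounting for the dot action of $s_\alpha$, is exactly $\lambda$.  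Kempf vanishing (Proposition~\ref{prop:jantzen_kempf_vanishing}) applied on $G/P_\alpha$ then yields $H^0(G/P_\alpha, V) \neq 0$ if and only if $\lambda$ is dominant, giving the statement.

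For case (b), I would iterate the rank-one computation along the mixed $p$-adic expansion.  If some digit $a_j$ with $j < n$ satisfies $a_j < p - 1$, then $V$ has several $L_\alpha$-composition factors, and the $T$-weights occurring as highest weights of $G$-composition factors of $H^0(G/P_\alpha, V)$ are precisely the partial ``absorptions'' $s_\alpha \cdot \lambda + \sum_{j = m'}^n a_j p^j \alpha$ obtained by carrying across the expansion.  Kempf vanishing selects those among them which are dominant, and the extremal one --- produced by taking $m$ minimal with $a_m < p - 1$ and then $m' \geq m$ minimal so that the resulting shift is dominant --- gives exactly the weight $\mu$ of the statement.  Non-vanishing of $H^1(G/B, \cL_{s_\alpha \cdot \lambda})$ is then equivalent to the existence of \emph{any} dominant shift, which is in turn equivalent to the dominance of the last possible absorption $s_\alpha \cdot \lambda + a_n p^n \alpha$.

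The main obstacle will be the bookkeeping in case (b).  One must verify that the $B$-module structure of $R^1\pi_* \cL_{s_\alpha\cdot\lambda}$, once refined through the Steinberg decomposition, really does match the absorption rule in the statement, and that the top $T$-weight of the fibrewise cohomology --- not merely an upper bound --- is the weight $\mu$.  This will require a careful interplay between the dot action of $s_\alpha$, the $p$-adic arithmetic of $\scalar{\lambda,\alpha^\vee}$ and Steinberg's tensor product theorem for $\SL_2$, as well as a check that no further cancellations occur among the composition factors after passing to $H^0(G/P_\alpha, -)$.
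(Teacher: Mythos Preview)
The paper does not provide its own proof of this proposition: it is stated with a direct citation to \cite[Proposition~II.5.15]{jantzen} and used as a black box in the proof of Theorem~\ref{thm:defos_flag_varieties}.  There is therefore no ``paper's proof'' to compare against.

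That said, your outline is the standard one, essentially Andersen's argument as reproduced in Jantzen.  Fibring over $G/P_\alpha$ and using that $\scalar{s_\alpha\cdot\lambda,\alpha^\vee}\leq -2$ to kill $R^0\pi_*$ is correct, and the identification of the fibre of $R^1\pi_*\cL_{s_\alpha\cdot\lambda}$ with (a twist of) a dual Weyl module for $L_\alpha$ is the right starting point.  In case~(a) the module is indeed simple and Kempf vanishing on $G/P_\alpha$ gives the equivalence directly.  In case~(b) your description of the composition factors via Steinberg's tensor product theorem and the ``absorption'' shifts $s_\alpha\cdot\lambda+\sum_{j\geq m'}a_jp^j\alpha$ is accurate, but be aware that the argument in Jantzen is more delicate than a simple Kempf-vanishing selection: one must also control which extensions among the $L_\alpha$-composition factors survive after applying $H^0(G/P_\alpha,-)$, and this is where the minimality conditions on $m$ and $m'$ genuinely enter.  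Your final paragraph correctly identifies this as the main bookkeeping obstacle; carrying it out rigorously requires the filtration arguments of \cite[II.5.13--5.15]{jantzen} rather than a direct application of Kempf vanishing to each factor separately.
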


\begin{prop}[{\cite[Proposition~II.4.13]{jantzen}}]
\label{prop:jantzen_cohomology_trivial}
Let $G$ be a reductive group over a field $k$.  Then the cohomology groups $H^i(G,k)$ of the trivial $G$-module vanish for $i \geq 1$.
\end{prop}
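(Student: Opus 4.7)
The plan is to reduce the statement to Kempf's vanishing theorem (Proposition~\ref{prop:jantzen_kempf_vanishing}), already at our disposal, via induction from a Borel subgroup. Fix a Borel $B \subset G$ with unipotent radical $U$ and maximal torus $T$, so $B = T \ltimes U$. The induction functor $\mathrm{ind}_B^G$, right adjoint to the exact restriction $\mathrm{res}^G_B$, has right derived functors computed by equivariant sheaf cohomology on $G/B$, namely $R^q \mathrm{ind}_B^G W \cong H^q(G/B, \mathcal{L}(W))$, where $\mathcal{L}(W) = G \times_B W$. For the trivial $B$-module $W = k$ one has $\mathcal{L}(k) = \mathcal{L}_0 = \mathcal{O}_{G/B}$; since $0$ is dominant, Proposition~\ref{prop:jantzen_kempf_vanishing} yields $R^q \mathrm{ind}_B^G k = 0$ for $q \geq 1$ and $\mathrm{ind}_B^G k = H^0(G/B, \mathcal{O}_{G/B}) = k$ by connectedness and properness of $G/B$.

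I would then invoke the Grothendieck spectral sequence for the composition of $\mathrm{ind}_B^G$ with the $G$-invariants functor $(-)^G$, whose total composition equals $(-)^B$ by Frobenius reciprocity:
\[
H^p(G, R^q \mathrm{ind}_B^G W) \Longrightarrow H^{p+q}(B, W).
\]
Taking $W = k$, the vanishing established in the previous paragraph collapses the $E_2$-page to its bottom row and produces a natural isomorphism $H^p(G, k) \cong H^p(B, k)$ for all $p \geq 0$.

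It then suffices to verify $H^p(B, k) = 0$ for $p \geq 1$. The Hochschild--Serre spectral sequence for the extension $1 \to U \to B \to T \to 1$, combined with linear reductivity of $T$ (so that $H^p(T, -) = 0$ for $p \geq 1$), reduces the claim to showing $H^n(U, k)^T = 0$ for $n \geq 1$. Since the $T$-weights on $\mathfrak{u}$ are exactly the positive roots, every class in $H^*(U, k)$ in positive cohomological degree carries a nonzero sum of positive roots as its $T$-weight; hence the trivial weight never appears, and the $T$-invariants vanish in positive degree.

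The main obstacle I expect to face is the weight analysis of $H^*(U, k)$ in characteristic $p$, where this cohomology is no longer simply the exterior algebra on $\mathfrak{u}^*$ but involves Frobenius-twisted generators from restricted Lie algebra cohomology. Nevertheless, each such generator still carries a nontrivial positive-integer combination of positive roots as its $T$-weight, which is enough to conclude.
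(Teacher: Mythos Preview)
The paper does not prove this proposition; it is simply quoted from \cite{jantzen} with a reference and no argument given. Your sketch is correct and is essentially the proof in Jantzen (II.4.7--II.4.12): pass from $G$ to $B$ via Kempf vanishing and the collapsing induction spectral sequence, then from $B$ to $U$ via Hochschild--Serre and linear reductivity of $T$, and finish with a weight argument on $H^*(U,k)$.

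One minor sign slip: with the paper's convention that the roots occurring in $\Lie{b}$ are the positive ones, the weights of $k[U]$ (and hence of the Hochschild complex and of $H^*(U,k)$) lie in $-\NN R^+$, not in $\NN R^+$; a class in positive cohomological degree carries a nonzero \emph{negative} combination of positive roots. This does not affect the conclusion that the trivial weight is absent. Your remark about Frobenius-twisted generators is on point, and if you want to sidestep the structure of $H^*(U,k)$ in characteristic $p$ entirely you can simply observe that the weight-zero part of the Hochschild complex $k[U^{\bullet}]$ consists of the constant functions, so its cohomology is that of the trivial group.
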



\subsection{Deformation theory of Frobenius twists}
\label{sub:defo_thy_frob_twists}

We now combine the results of Theorem~\ref{thm:def_equiv_vb} and Theorem~\ref{thm:defos_frob_red_gps} to describe deformation theory of Frobenius twists of tautological vector bundles on Grassmanian varieties.  In this section, we interpret $\Gr(d,N)$ as the homogeneous space of the group $\GL_N$ parameterizing $d$-dimensional linear subspaces of a fixed $N$-dimensional vector space $V$.  The tautological bundle is the natural bundle with fiber isomorphic to $W$ for a point $[W \subset V] \in \Gr(d,N)$.  In what follows we shall freely use the description of root data of $\GL_N$ given in Example~\ref{example:root_data_gl_N}.

\begin{thm}
\label{thm:defos_flag_varieties}
Let $\cE$ be the tautological vector bundle on the Grassmanian variety $\Gr(d,N)$, for $2 \leq d \leq N - 2$.  Then $\PP(F^*\cE)$ does not lift to any ring where $p \neq 0$.
\end{thm}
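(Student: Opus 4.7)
The plan is to assume, for contradiction, the existence of a lift $\tilde Y$ of $Y = \PP(F^*\cE)$ over some $A \in \Art_{W(k)}(k)$ with $p \neq 0$ in $A$, and to manufacture from it a lift of the Frobenius homomorphism of $\GL_d$. Theorem~\ref{thm:defos_frob_red_gps} will then supply the contradiction, since $d \geq 2$ makes $\GL_d$ a reductive group of positive rank.

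The first move is to descend from the projective bundle to the vector bundle. By Demazure's theorem (invoked in Remark~\ref{remark:infinitesimally_rigid}) the Grassmannian $\Gr(d,N)$ is infinitesimally rigid, and $H^2(\Gr(d,N),\cO)$ vanishes by standard Bott vanishing. Proposition~\ref{prop:defs_projective_bundles} therefore guarantees that $\tilde Y$ is the projectivization of a deformation $\widetilde{F^*\cE}$ on the unique lift of $\Gr(d,N)$. To promote $\widetilde{F^*\cE}$ to a $\GL_N$-equivariant lift via Theorem~\ref{thm:def_equiv_vb}, the three cohomological hypotheses must be verified: the condition $H^1(\GL_N,k) = 0$ is immediate from Proposition~\ref{prop:jantzen_cohomology_trivial}, while the conditions involving $\cEnd(F^*\cE) = F^*\cEnd(\cE)$ I would attack by pulling up along $\GL_N/B \to \GL_N/P$, decomposing $\cEnd(\cE)$ into pieces modeled by line bundles $\cL_\mu$, and exploiting the fact that the Frobenius pullback multiplies weights by $p$. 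Thanks to this $p$-scaling, the resulting weights are either dominant (so Kempf vanishing, Proposition~\ref{prop:jantzen_kempf_vanishing}, applies) or safely outside the narrow window where Andersen's $H^1$ is nonzero (Proposition~\ref{prop:jantzen_bwb}); an outer application of Proposition~\ref{prop:jantzen_cohomology_trivial} then disposes of the remaining group-cohomology layer.

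Once the lift is $\GL_N$-equivariant, Corollary~\ref{cor:equivariant_bundle_on_homogeneous_spaces} identifies $F^*\cE$ with a $P$-representation, namely $F_{\GL_d} \circ \rho$, where $\rho \colon P \twoheadrightarrow \GL_d$ is the projection onto the first Levi factor followed by the tautological representation. Lemma~\ref{lem:g_equivariant_deform_morphism} then yields a homomorphism lift $\tilde\phi \colon P_A \to \GL_{d,A}$ of $F_{\GL_d} \circ \rho$. Since $\rho$ is surjective and admits the tautological lift to $W(k)$, the homomorphism $\tilde\phi$ descends along $\rho$ to a lift $\tilde F_{\GL_d} \colon \GL_{d,A} \to \GL_{d,A}'$ of the Frobenius homomorphism of $\GL_d$. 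Theorem~\ref{thm:defos_frob_red_gps} then forces $p = 0$ in $A$, contradicting the choice of $A$.

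The step I expect to be the main obstacle is the cohomological verification underlying the application of Theorem~\ref{thm:def_equiv_vb}: one must pin down the decomposition of $\cEnd(\cE)$ on $\Gr(d,N)$ precisely enough that, after the $p$-scaling induced by $F$, Andersen's vanishing criteria apply uniformly and produce a trivial $\GL_N$-action on $H^1(X, \cEnd(F^*\cE))$. A secondary subtlety arises in the final descent, where one must check that the representation lift $\tilde\phi$ truly produces a group-scheme lift of $F_{\GL_d}$ rather than of some twist; this will follow from the surjectivity of $\rho$ combined with the rigidity statement of Proposition~\ref{prop:morphism_rigidified}, which constrains the rigidification data attached to $\tilde\phi$ to be the one dictated by the special fibre.
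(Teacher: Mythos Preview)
Your overall strategy matches the paper's proof almost exactly: pass from $\Def_{\PP(F^*\cE)}$ to $\Def_{F^*\cE}$ via Proposition~\ref{prop:defs_projective_bundles}, then to $\Def^{\GL_N}_{F^*\cE}$ via Theorem~\ref{thm:def_equiv_vb}, then to deformations of the homomorphism $f = F_{\GL_d}\circ\pi$ via Lemma~\ref{lem:g_equivariant_deform_morphism}, and finally to $\Def_{F_{\GL_d}}$, concluding with Theorem~\ref{thm:defos_frob_red_gps}. Two points in your sketch deserve correction.

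First, in the cohomological verification you assert that after $p$-scaling the weights land ``safely outside the narrow window where Andersen's $H^1$ is nonzero''. This is not quite true: for the weight $p(\ell_i-\ell_{i-1})$ (the case $i=j+1$ in the filtration of $\pi^*\cEnd(F^*\cE)$ on $\GL_N/B$), Proposition~\ref{prop:jantzen_bwb}(b) gives a \emph{nonzero} $H^1$, but with highest weight zero, hence a trivial $\GL_N$-module. The paper handles this via a small lemma showing that a filtration by equivariant line bundles whose $H^1$'s are each trivial-or-zero forces $H^1$ of the total bundle to be trivial-or-zero; you will need this, since vanishing alone does not go through.

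Second, your final step---``$\tilde\phi$ descends along $\rho$'' because $\rho$ is surjective---is not justified as stated. Surjectivity of $\rho_A\colon P_A\to\GL_{d,A}$ does not force an arbitrary lift $\tilde\phi\colon P_A\to\GL_{d,A}$ of $F_{\GL_d}\circ\rho$ to factor through $\rho_A$; you would need $\tilde\phi$ to kill the unipotent radical of $P_A$, which is not automatic. The paper sidesteps this entirely: the projection $\pi\colon P\to\GL_d$ has a canonical \emph{section} $i\colon\GL_d\hookrightarrow P$ (the Levi block), defined over $W(k)$, and one simply restricts $\tilde\phi$ along $i_A$. This gives the map $\Def_f\to\Def_{F_{\GL_d}}$ directly, and its smoothness is immediate from the fact that $i$ splits $\pi$. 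Proposition~\ref{prop:morphism_rigidified} is not needed at this step; it is already baked into the proof of Theorem~\ref{thm:defos_frob_red_gps}.
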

\begin{proof}

The proof depends on the description of the $\GL_N$-equivariant structure of the bundle $F^*\cE$.  As in \S\ref{sub:representation_theory}, let $ B$ be the Borel subgroup of upper triangular matrices, and let $P$ be the parabolic subgroup of block matrices such that $\Gr(d,N) \cong \GL_N/P$.  Using Corollary~\ref{cor:equivariant_bundle_on_homogeneous_spaces}, we immediately see that $\cE$ is a vector bundle corresponding the representation $\pi \colon P \to \GL_k$ given by the projection onto the $d \times d$ block isomorphic to $\GL_d$.  Consequently, the vector bundle $F^*\cE$ is associated with the composition of the projection $\pi$ and the Frobenius homomorphism of $\GL_d$, which we~denote~by~$f$. 

We infer the claim of the proposition from the existence of the following diagram of \emph{smooth} morphisms of deformation functors
\[
\Def_{\PP(F^*\cE)} \longleftarrow \Def_{F^*\cE} \longleftarrow \Def^{\GL_N}_{F^*\cE} \longleftarrow \Def_{f} 
\longrightarrow \Def_{F_{\GL_d}}
\]
where the equivariant deformations of $F^*\cE$ are considered with respect to the natural lifting $\tilde P \subset \GL_{N,W(k)}$ of $P$ (cf. \S\ref{subsec:equiv_deform_sheaves} and \S\ref{subsec:equiv_deform_sheaves_homogeneous_spaces}).  We begin the proof of existence of the diagram by observing that, using Remark~\ref{remark:infinitesimally_rigid}, the Grassmanian is infinitesimally rigid and therefore there exists a morphism of deformation functors $\Def_{F^*\cE} \to \Def_{\PP(F^*\cE)}$, which is smooth by Proposition~\ref{prop:defs_projective_bundles}.  

We now prove that the forgetful natural transformation $\Def^{\GL_N}_{F^*\cE} \ra \Def_{F^*\cE}$ associated with the $\GL_N$-equivariant structure on $F^*\cE$, as described in \S\ref{sub:equivariant_sheaves}, is smooth.  We aim at applying Theorem~\ref{thm:def_equiv_vb}.  For this purpose, we first see that $\GL_N$ is reductive and therefore $H^1(\GL_N,k) = 0$ by Proposition~\ref{prop:jantzen_cohomology_trivial}, which yields condition iii) of Theorem~\ref{thm:def_equiv_vb}.  In order to prove that other assumptions are satisfied we first provide a simple technical lemma.

\begin{lemma}
\label{lem:technical_proof_totaro}
Let $G$ be a reductive group, and let $B$ be the Borel subgroup.  Suppose $\cF$ is an equivariant vector bundle on $G/B$ which is filtered by equivariant line bundles $\{\cL_i\}_{i=1\ldots n}$ such that $H^1(G/B,\cL_i)$ is either a trivial $G$-representation or zero.  Then $H^1(G/B,\cF)$ is either a trivial representation or zero. 
\end{lemma}
\begin{proof}
We reason by induction with respect to $n$.  For $n = 1$ the claim is clear.  For $n \geq 2$, we consider the long exact cohomology sequence associated with the short exact sequence
\[
0 \ra \cL_1 \ra \cF \ra \cF/\cL_1 \ra 0.
\]
We obtain a sequence
\[
\cdots \ra H^1(G/B,\cL_1) \ra H^1(G/B,\cF) \ra H^1(G/B,\cF) \ra \cdots.
\]
By induction hypothesis both right and left terms are either trivial or zero, and thus the middle term likewise (there are no non-trivial extensions by Kempf vanishing).
\end{proof}

We now proceed with the proof of the proposition.  We observe that the set of weights of the representation associated with $F^*\cE$ is equal to $\{p\ell_1,\ldots,p\ell_d\}$.  Consequently, the set of weights of the endomorphism representation is equal to $\{p(\ell_{i} - \ell_{j})\}_{1 \leq i,j \leq d} $.  The natural projection $\pi \colon \GL_N/B \to \GL_N/P$ satisfies the condition $R\pi_*\cO_{\GL_N/B} = \cO_{\GL_N/P}$, and hence by standard Leray spectral sequence argument we may compute cohomology after pulling back to $\GL_N/B$.  Now, we observe that $\pi^*\cEnd(F^*\cE)$ is filtered by line bundles associated with weights $p(\ell_i - \ell_j)$, for $1 \leq i,j \leq d$.  Since $p(\ell_i - \ell_{j})$ is not dominant for $i \neq j$, by Kempf vanishing (see Proposition~\ref{prop:jantzen_kempf_vanishing}) the group 
\[
H^0(\GL_N/P,\cEnd(F^*\cE)) = H^0(\GL_N/B,\pi^*\cEnd(F^*\cE))
\] 
is filtered by trivial $G$-representations, and consequently $H^2\left(\GL_N,H^0(\GL_N/P,\cEnd(F^*\cE))\right)$ is zero by Proposition~\ref{prop:jantzen_cohomology_trivial}.  This yields condition i) of Theorem~\ref{thm:def_equiv_vb}.  To finish the proof, we need to show that $H^1(\GL_N/P,\cEnd(F^*\cE)) = H^1\left(\GL_N/B,\pi^*\cEnd(F^*\cE)\right)$ is a trivial $\GL_N$-module.  To this end, we use the filtration of $\pi^*\cEnd(F^*\cE)$ with quotient $\cL_{p(\ell_i - \ell_j)}$ again.  In order to apply Lemma~\ref{lem:technical_proof_totaro}, we distinguish four cases. 
\begin{enumerate}
\item For $i = j$, the $1$-st cohomology group of the suitable line bundle clearly vanishes by Kempf vanishing.  
\item For $i < j$, we observe that 
\[
s_{\ell_{j} - \ell_{j+1}} \cdot (p(\ell_{i} - \ell_j)) = p(\ell_{i} - \ell_{j+1}) - (\ell_{j} - \ell_{j+1}),
\]
and hence for $\lambda = s_{\ell_{j} - \ell_{j+1}} \cdot (p(\ell_{i} - \ell_{j}))$ we have

\[
\scalar{\lambda,(\ell_j - \ell_{j+1})^\vee} = \scalar{p(\ell_{i} - \ell_{j+1}) - (\ell_j - \ell_{j+1}),\ell_j - \ell_{j+1}} = p-2.
\]
By checking scalar product with $\ell_{j+1} - \ell_{j+2}$, which is allowable because $j \leq N-2$, we see that $\lambda$ is not dominant.  This allows us to apply Proposition~\ref{prop:jantzen_bwb} a) to see that $H^1(G/B,\cL_{p(\ell_i - \ell_j)}) = 0$.
\item For $i > j+1$, we reason analogously using a simple root $\ell_{i-1} - \ell_{i}$ instead of $\ell_{j} - \ell_{j+1}$.  More precisely we see that
\[
\scalar{s_{\ell_{i-1} - \ell_{i}} \cdot p(\ell_{i} - \ell_{j}),\ell_{i-1} - \ell_{i}} = \scalar{p(\ell_{i-1} - \ell_{j}) - (\ell_{i-1} - \ell_{i}),\ell_{i-1} - \ell_{i}} = p-2,
\]
and therefore the same arguments as in (2) can be applied.  
\item Finally, for $i = j+1$, using a simple root $\ell_{i-1} - \ell_i$ and taking 
\[
\lambda = s_{\ell_{i-1} - \ell_i} \cdot p(\ell_{i} - \ell_{i-1}) = (p-1)(\ell_{i-1} - \ell_{i}),
\]
we compute as above to observe that
\[
\scalar{\lambda,(\ell_{i-1} - \ell_{i})^\vee} = \scalar{(p-1)(\ell_{i-1} - \ell_i),\ell_{i-1} - \ell_{i}} = 2p-2 = p + (p-2).
\] 
This allows us to apply Proposition~\ref{prop:jantzen_bwb} b) to see that $H^1(\GL_N/B,\cL_{p(\ell_i - \ell_{i-1})})$ is in fact non-zero of highest weight zero, and therefore trivial. 
\end{enumerate}
Summing up all the cases and applying Lemma~\ref{lem:technical_proof_totaro}, we see that $H^1(\GL_N/P,F^*\cEnd(\cE))$ is a trivial representation.  This means that all the assumptions of Theorem~\ref{thm:def_equiv_vb} are satisfied and therefore the forgetful morphism of deformation functors $\Def^{\GL_N}_{F^*\cE} \to \Def_{F^*\cE}$ is smooth as required.

Now, we construct the arrows $\Def^{\GL_N}_{F^*\cE} \longleftarrow \Def_f \longrightarrow \Def_{F_{\GL_d}}$ and prove they are both smooth.  The existence and smoothness of the arrow facing left is the content of Lemma~\ref{lem:g_equivariant_deform_morphism}.  The right arrow is induced by the inclusion $i \colon \GL_{d,W(k)} \to \tilde P$ of the $d \times d$ block.  Its smoothness clearly follows because $i$ is in fact a splitting of $\pi$.

The final claim is a direct corollary of Theorem~\ref{thm:defos_frob_red_gps}.

\end{proof}

\begin{cor}
Let $X$ be a Fano variety violating Kodaira vanishing described in \S\ref{ss:intro_examples_totaro}.  Then $X$ does not lift to any ring where $p \neq 0$.
\end{cor}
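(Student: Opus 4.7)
The plan is to derive this as an immediate specialization of Theorem~\ref{thm:defos_flag_varieties}. Recall from \S\ref{ss:intro_examples_totaro} that Totaro's variety is, by construction, $X \cong \PP_{\Gr(2,N)}(F^*\cS)$ with $N = p+2$ and $\cS$ the tautological rank-two subbundle on $\Gr(2,N)$. Thus $X$ is precisely an example of the schemes covered by Theorem~\ref{thm:defos_flag_varieties}, with parameters $(d,N) = (2, p+2)$.

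The only thing to check is therefore the numerical hypothesis $2 \leq d \leq N - 2$ of Theorem~\ref{thm:defos_flag_varieties}. The lower bound holds with equality, and the upper bound $2 \leq p$ is guaranteed by the standing convention that $p \geq 5$. Consequently Theorem~\ref{thm:defos_flag_varieties} applies verbatim and yields that $X = \PP(F^*\cS)$ admits no deformation over any $A \in \Art_{W(k)}(k)$ in which $p$ is nonzero, which is the claim. In particular, no step of the proof requires any new argument: the genuine work (reduction from projective bundle to vector bundle, equivariance of the deformation via Theorem~\ref{thm:def_equiv_vb}, the Andersen--Kempf cohomology computations, and finally the rigidity of the Frobenius homomorphism of $\GL_2$ provided by Theorem~\ref{thm:defos_frob_red_gps}) has already been carried out in the proof of Theorem~\ref{thm:defos_flag_varieties}. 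So there is no real obstacle; the corollary is a one-line instantiation.
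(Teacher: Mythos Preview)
Your proof is correct and follows exactly the same approach as the paper: recall that Totaro's variety is $\PP_{\Gr(2,N)}(F^*\cS)$ with $d=2$ and $N=p+2$, then apply Theorem~\ref{thm:defos_flag_varieties}. The paper's own proof is the one-liner you anticipated, and your added verification of the hypothesis $2 \leq d \leq N-2$ (using $p \geq 5$) is a welcome bit of care.
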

\begin{proof}
Recalling the description in \S\ref{ss:intro_examples_totaro}, it suffices to apply Theorem~\ref{thm:defos_flag_varieties} for $d = 2$ and $N = p+2$.
\end{proof}


\section{Another proof of non-liftability of Frobenius}
\label{sec:another_proof_non_liftability}

In this supplementary chapter we present another proof of mod $p^2$ and characteristic zero non-liftability of the Frobenius homomorphism of reductive groups.  We decided to include it in the paper since it is very different in spirit and might be of interest for liftability questions for other, potentially non-reductive groups.


\subsection{Frobenius liftings of $G$ and Frobenius pullbacks of principal $G$-bundles}
\label{sub:criterion_principal_bundles}

This subsection is devoted to the formalization of the observation that a Frobenius lifting of a group $G$ gives a natural way of lifting Frobenius pullbacks of principal $G$-bundles and vector bundles with an appropriate reduction of the structure group.  

\begin{prop}
\label{prop:frobenius_liftings_and_principal_bundles}
Let $\tilde G$ be a group scheme over $W(k)$.  Assume there exists a lifting $F_{G,A} \colon G_A \to G'_A$ of the Frobenius homomorphism of $G$ over $A \in \Art_{W(k)}(k)$.  Then the following assertions hold
\begin{enumerate}
\item for every $G_A$-bundle $P_A \to X_A$ lifting a $G$-bundle $P \to X$ there exists a $G'_A$-bundle $P'_A$ lifting the Frobenius pullback $P' = F_X^*P$, 
\item for every homomorphism $\tilde \pi \colon \tilde G \to \GL_{n,W(k)}$ and a vector bundle $\cE_A$ of rank $n$ on $X_A$, admitting a reduction of structure group along $\pi_A$, the vector bundle $\cE' = F^*\cE$ lifts over $A$,
\end{enumerate}
\end{prop}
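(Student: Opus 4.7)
The plan is to prove both statements via the operation of extension of structure group, together with the fact that, for a principal $G$-bundle on a characteristic $p$ scheme, the absolute Frobenius pullback agrees with the change of structure group along the relative Frobenius $F_{G/k}\colon G \to G'$.

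For (1), I would define the candidate as
\[
P'_A \;:=\; P_A \times^{G_A,\, F_{G,A}} G'_A,
\]
the $G'_A$-bundle on $X_A$ obtained from $P_A$ by change of structure group along $F_{G,A}$ (explicitly, the fppf quotient of $P_A \times_A G'_A$ by the diagonal $G_A$-action, where $G_A$ acts on $G'_A$ via $F_{G,A}$ and left multiplication). Since $P_A$ is a flat $G_A$-torsor and $F_{G,A}$ is a homomorphism over $A$, this is a flat $G'_A$-torsor on $X_A$. The key verification is that the special fibre $P'_A \otimes_A k$ is canonically isomorphic to $F_X^*P$ as a $G'$-bundle. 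As $F_{G,A}$ specializes to the relative Frobenius $F_{G/k}$, the claim reduces to the classical identification
\[
F_X^*P \;\cong\; P \times^{G,\, F_{G/k}} G',
\]
which I would verify on a trivializing fppf cover: if $P$ has transition cocycle $\{g_{ij}\}$, then by naturality of the absolute Frobenius the cocycle of $F_X^*P$ is $\{F_{G/k}(g_{ij})\}$, exactly the cocycle produced by the push-out.

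For (2), the assumption provides a principal $G_A$-bundle $P_A$ on $X_A$ with $\cE_A \cong P_A \times^{G_A,\, \pi_A} \mathbb{A}^n_A$. Apply part (1) to obtain a $G'_A$-bundle $P'_A$ on $X_A$ lifting $F_X^*P$, where $P = P_A \otimes_A k$. Since $\GL_{n,W(k)}$ is defined over $\mathbb{Z}$, its Frobenius twist is canonically itself, and the Frobenius twist of $\tilde\pi$ supplies a homomorphism $\pi'_A\colon G'_A \to \GL_{n,A}$ lifting the relative Frobenius twist of $\pi$. I would then set
\[
\cE'_A \;:=\; P'_A \times^{G'_A,\, \pi'_A} \mathbb{A}^n_A.
\]
The identification $\cE'_A \otimes_A k \cong F^*\cE$ follows from the special-fibre identification of (1) together with the standard compatibility of $F^*$ with the associated-bundle construction, applied to the isomorphism $\cE \cong P \times^{G,\,\pi} \mathbb{A}^n_k$.

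The main delicate point I expect to encounter is the canonical identification in (1) between $F_X^*P$ and the extension of structure group along $F_{G/k}$. Since the absolute Frobenius $F_X$ is not a morphism over $\Spec k$, one must keep careful track of the twist of the structure group — most cleanly by factoring $F_X = \mathrm{pr} \circ F_{X/k}$ through the relative Frobenius and matching the twisting conventions on both sides. Once this identification is in place, part (2) is a formal consequence of the functoriality of associated bundles.
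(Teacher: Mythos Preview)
Your proposal is correct and is essentially the paper's argument in different clothing: the paper phrases the construction as composing the moduli map $m_{P_A}\colon X_A \to BG_A$ with the classifying-stack morphism $BF_{G,A}\colon BG_A \to BG'_A$ (and then with $B\pi'_A$ for part (2)), which on objects is exactly your extension of structure group $P_A \times^{G_A,F_{G,A}} G'_A$. The only difference is that you spell out the special-fibre identification $F_X^*P \cong P \times^{G,F_{G/k}} G'$ via cocycles, whereas the paper absorbs this into the commutativity of its diagram; both are the same content.
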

\begin{proof}
First, we observe that using considerations of \S\ref{sub:preliminaries_stacks_equivariant_sheaves} the homomorphism $F_{G,A}$ induces a morphism of classifying stacks $BF_{G,A} \colon BG_A \to BG'_A$.  In our setting, we obtain a diagram:
\[
\xymatrix{
  X \ar[rr]^{m_{P}} \ar[d] & & BG \ar[rr]^{BF_G} \ar[d] & & BG' \ar[d] \\
  X_A \ar@{-->}@/_1.2pc/[rrrr]_{m_{P'_A}} \ar[rr]^(0.4){m_{P_A}} & & BG_A \ar[rr]^{BF_{G,A}} & & BG'_A,
}
\]
where $m_{P} \colon X \to BG$ and $m_{P_A} \colon X_A \to BG_A$ are the natural moduli maps associated with the principal bundles $P \to X$ and $P_A \to X_A$.  The composition $BF_{G,A} \circ m_{P_A}$ gives rise to a principal bundle $P'_A$ which by commutativity of the above diagram clearly lifts $P'$.  This finishes the first part of the proof.  For the rest, we extend the diagram above with the reduction of the structure group morphism induced by $\pi_A$ to obtain 

\[
\xymatrix{
  X \ar@/^1.5pc/[rrrr]^{m_{\cE}} \ar[rr]\ar[d] & & BG \ar[rr]^{B\pi}\ar[d] & & BGL_n \ar[rr]^{BF_{\GL_n}} & & BGL_n \ar[d] \\
  X_A \ar@{-->}@/_1.6pc/[rrrrrr]_{m_{\cE'_A}}\ar[rr]^{m_{\cE_A}} & & BG_A \ar[rr]^{BF_{G,A}} & & BG'_A \ar[rr]^{B\pi'_A} & & BGL_{n,A}.
}
\]
Now, we use similar reasoning as above to see that the moduli map $m_{\cE'_A} = B\pi_A \circ BF_{G,A} \circ m_{\cE_A}$ gives a lifting of $\cE'$
\end{proof}

\begin{cor}
\label{cor:frobenius_liftings_and_principal_bundles_formally}
Let $\tilde G$ be a group scheme over $W(k)$.  Assume there exists a formal lifting $\hat{F_G} \colon \hat{G} \to \hat{G}$ of the Frobenius homomorphism of $G$ over a complete $W(k)$-algebra $\hat{A}$.  Then the following assertions hold
\begin{enumerate}
\item for every formal $\hat{G}$-bundle $\hat{P} \to \hat{X}$ lifting a $G$-bundle $P \to X$ there exists a $\hat{G}$-bundle $\hat{P'}$ lifting the Frobenius pullback $P' = F_X^*P$,
\item for every homomorphism $\tilde \pi \colon \tilde G \to \GL_{n,W(k)}$ and a vector bundle $\hat{\cE}$ of rank $n$ on $\hat{X}$, admitting a reduction of structure group along $\hat{\pi}$, the vector bundle $\cE' = F^*\cE$ lifts formally over $\hat{A}$.
\end{enumerate}
\end{cor}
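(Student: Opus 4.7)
The plan is to reduce the formal statement to the Artinian case already handled by Proposition~\ref{prop:frobenius_liftings_and_principal_bundles} via an inverse limit argument. Write $\hat A = \varprojlim_{n} A_n$ where $A_n = \hat A / \mathfrak m_{\hat A}^{n+1}$ is a decreasing system of Artinian quotients in $\Art_{W(k)}(k)$. The formal data restrict at every finite level: the formal Frobenius lifting $\hat F_G$ produces genuine homomorphisms $F_{G,A_n} \colon G_{A_n} \to G'_{A_n}$ lifting the relative Frobenius, the formal principal bundle $\hat P$ restricts to honest $G_{A_n}$-bundles $P_{A_n} \to X_{A_n}$, and the formal vector bundle $\hat \cE$ restricts to $\cE_{A_n}$ on $X_{A_n}$ together with a reduction of structure group along $\pi_{A_n}$.

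At each level $n$, I apply Proposition~\ref{prop:frobenius_liftings_and_principal_bundles} at the ring $A_n$ to manufacture a $G'_{A_n}$-bundle $P'_{A_n}$ lifting $P' = F_X^*P$ in case (1), and a vector bundle $\cE'_{A_n}$ lifting $\cE' = F^*\cE$ in case (2). The main step, and the only one requiring care, is the compatibility of these liftings under the transition maps $A_{n+1} \twoheadrightarrow A_n$. This is guaranteed by the fact that the construction in the proof of Proposition~\ref{prop:frobenius_liftings_and_principal_bundles} is \emph{functorial}: the lifted bundle $P'_{A_n}$ is by definition classified by the composition
\[
BF_{G,A_n} \circ m_{P_{A_n}} \colon X_{A_n} \longrightarrow BG'_{A_n},
\]
and both the moduli map $m_{P_{A_n}}$ and the stack morphism $BF_{G,A_n}$ are compatible with base change along $A_{n+1} \to A_n$. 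Hence the composed moduli maps form an inverse system, and so do the classified bundles $\{P'_{A_n}\}$. The same reasoning applied to the extended diagram incorporating $B\pi_A$ and $BF_{\GL_n}$ yields a compatible system $\{\cE'_{A_n}\}$ in case (2).

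Finally, the coherent systems $\{P'_{A_n}\}_{n \geq 0}$ and $\{\cE'_{A_n}\}_{n \geq 0}$ are, by the definition of formal schemes and bundles thereon, precisely the data of a formal $\hat G$-bundle $\hat{P'} \to \hat X$ lifting $P'$, respectively of a formal vector bundle $\hat{\cE'}$ lifting $\cE'$. The hardest point in the whole argument is really just the bookkeeping of naturality under the quotient maps $A_{n+1} \twoheadrightarrow A_n$; this is automatic from the universality of the classifying stacks $BG_A$ and the functoriality of the base-change operations for the moduli maps, so I do not anticipate any genuine obstacle beyond streamlining the diagrams so that the passage to the inverse limit is transparent.
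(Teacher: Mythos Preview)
Your proposal is correct and follows exactly the approach the paper takes: the paper's proof is the one-liner ``We simply use the previous result to derive a compatible system of liftings,'' and what you have written is precisely a fleshed-out version of that sentence. The only extra content you supply is the explicit verification of compatibility via functoriality of the classifying-stack construction, which is the right way to justify the compatible system and is implicit in the paper's reference to the ``previous result.''
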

\begin{proof}
We simply use the previous result to derive a compatible system of liftings.
\end{proof}

\begin{remark}
It is important to note that the scheme $X_A$ does not necessarily admit a Frobenius lifting and hence we cannot constuct $P'_A$ as a pullback of $P_A$.
\end{remark}

\begin{example}
In the case of vector bundles, that is, principal $\GL_n$ bundles, the above application of classifying stacks can be substituted with the following more down to earth observation.  Suppose we are given a scheme $X$ over $k$ together with an $A$-lifting $\tilde X$, for $A \in \Art_{W(k)}(k)$.  Let $\cE$ be a vector bundle on $X$, and let $\tilde \cE$ be its lifting over $\tilde X$.  Assume that $\tilde \cE$ is defined by a covering $\tilde U_i$ and a cocycle $\tilde g_{ij} \in \GL_n(\cO_{\tilde U_{ij}})$.  If $\tilde F \colon \GL_{n,A} \to \GL_{n,A}$ is a lifting of the Frobenius homomorphism, then $\tilde F(\tilde g_{ij})$ is the cocycle inducing a lifting of $F^*\cE$.
\end{example}


\subsection{Functoriality properties of Frobenius liftings}
\label{sub:def_frob_obstruction_thy}

In this section, we consider a surjective morphism of group schemes $\tilde \pi \colon \tilde G \to \tilde H$ flat over $W(k)$, and attempt to understand the relation between the Frobenius liftings of $G$ and $H$.  In particular, we prove some necessary criteria for liftings of the Frobenius homomorphism of $G$ to descend along $\pi$, and liftings of the Frobenius homomorphism of $H$ to lift along $\pi$.

\begin{prop}[Descend along morphism with linearly reductive kernels]
\label{prop:descend_along_linearly_reductive}
Assume that the kernel of $\tilde \pi \colon \tilde G \to \tilde H$ is linearly reductive.  Then for every $A \in \Art_{W(k)}(k)$ and a Frobenius lifting $F_{G,A} \colon G_A \to G'_A$ there exists a functorial $F_{H,A} \colon H_A \to H'_A$ compatible with $\pi$.  In particular, there exists a morphism of deformation functors $\Def_{\tilde G} \to \Def_{\tilde H}$
\end{prop}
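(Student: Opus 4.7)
The plan is to construct $F_{H,A}$ by fppf descent: I will show that the composition
\[
\phi_A \;=\; \pi'_A \circ F_{G,A} \circ i_A \colon K_A \longrightarrow H'_A,
\]
where $i_A \colon K_A \hookrightarrow G_A$ denotes the inclusion of $K_A = \ker(\pi_A)$, is the trivial homomorphism. Granted this, the universal property of the flat quotient $H_A = G_A / K_A$ (available since $\tilde\pi$ is faithfully flat, so the same holds for $\pi_A$) produces a unique homomorphism $F_{H,A} \colon H_A \to H'_A$ fitting into a commutative square with $\pi_A$ and $\pi'_A$. On the special fibre the square reduces to the standard functoriality $\pi' \circ F_G = F_H \circ \pi$, so $F_{H,A}$ automatically lifts the Frobenius of $H$; uniqueness of the descent then yields the required functorial assignment $\Def_{F_{\tilde G}} \to \Def_{F_{\tilde H}}$.

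Everything thus reduces to showing $\phi_A = 0$, which I will prove by induction on the length of $A$ along small extensions $A' \twoheadrightarrow A$ with square-zero kernel $I \subset A'$. The base case $A = k$ is the identity $\pi' \circ F_G \circ i = 0$ on the special fibre. For the inductive step, assume $\phi_A = 0$ and let $\phi_{A'}$ denote the lift to $A'$; I must show $\phi_{A'} = 0$. Because $I^2 = 0$, the comparison of $\phi_{A'}$ with the trivial lift over $A'$ is measured by a map
\[
\delta \colon K_{A'} \longrightarrow I \otimes_k \Lie{h}'_k,
\]
where the target is viewed as the additive $k$-group scheme $\GG_{a,k}^N$. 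The homomorphism property of $\phi_{A'}$, combined with triviality on the quotient by $I$, forces $\delta$ to be a homomorphism of $k$-group schemes (the multiplicative correction is killed by $I^2 = 0$); conversely any such $\delta$ gives a homomorphism lifting the trivial one.

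The key input is then the statement $\Hom_{\mathrm{grp}/k}(K, \GG_{a,k}^N) = 0$ for $K$ linearly reductive: the image of any such homomorphism is simultaneously a linearly reductive quotient of $K$ and a closed subgroup scheme of $\GG_a^N$, and the latter possesses no non-trivial linearly reductive closed subgroup scheme (its connected subgroups are unipotent, and its non-trivial finite subgroups are extensions of $\alpha_p$'s and $\ZZ/p$'s, none of which are linearly reductive). Hence $\delta = 0$, so $\phi_{A'}$ is trivial, completing the induction. The main obstacle I foresee is purely a bookkeeping one, namely phrasing the ``$\delta$ is an additive homomorphism'' assertion cleanly at the level of group schemes rather than $A$-points; this can be done by testing on the functor of points, and once formulated correctly the vanishing is immediate from linear reductivity of $K$.
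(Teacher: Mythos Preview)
Your proof is correct and follows the same strategy as the paper: both reduce to showing that $\pi'_A \circ F_{G,A} \circ i_A$ is the trivial homomorphism, and both use linear reductivity of $K$ to kill the space of deformations of the trivial homomorphism $K \to H'$. The only difference is cosmetic---the paper cites Illusie's result that such deformations are classified by $H^1(K,\Lie{h})$ and invokes the abstract vanishing of higher cohomology for linearly reductive groups, whereas you unwind the induction along small extensions by hand and phrase the vanishing concretely as $\Hom_{\mathrm{grp}/k}(K,\GG_a^N)=0$; these are equivalent since for the trivial action $H^1(K,\Lie{h}) = \Hom_{\mathrm{grp}/k}(K,\Lie{h})$.
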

\begin{proof}
Let $\tilde K$ be the kernel of $\tilde \pi$.  For every $A \in \Art_{W(k)}(k)$ and a Frobenius lifting $F_{G,A} \colon G_A \to G'_A$, we have a diagram:
\[
\xymatrix{
0 \ar[r] & K_A \ar[r]^i & G_A \ar[r]^{\pi_A}\ar[d]^{F_{G,A}} & H_A \ar[r] & 0 \\
0 \ar[r] & K'_A \ar[r]_i & G'_A \ar[r]_{\pi_A} & H'_A \ar[r] & 0.
}
\]
In order to show that $F_{G,A}$ descends to a Frobenius lifting $F_{H,A} \colon H_A \to H'_A$ compatible with $\pi$ we need to show that the obstruction class 
\[
\sigma = \pi_A \circ F_{G,A} \circ i \colon K \to H
\] 
is a trivial homomorphism.  However, reducing the diagram over $k$ and using the commutativity of $\pi$ and Frobenii, we see that $\sigma$ is in fact a deformation of a trivial homomorphism.  By \cite{illusieII}, the infinitesimal deformations of the trivial homomorphism $K \to H$ are classified by $H^1(K,\Lie{h})$ and therefore, using the assumption of linearly reductivity, we see that $\sigma$ is trivial.  This finishes the proof.
\end{proof}

\begin{cor}
For every morphism of reductive group schemes $\pi \colon G \to H$ defined over $W(k)$ with linearly reductive kernel, we have a morphism of deformation functors $\Def_{F_{G}} \to \Def_{F_{H}}$
\end{cor}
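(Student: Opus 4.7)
The plan is to bootstrap directly from Proposition~\ref{prop:descend_along_linearly_reductive}, which already does the heavy lifting; the corollary is essentially a naturality/packaging statement. For each $A \in \Art_{W(k)}(k)$, I define the set-theoretic map
\[
\Def_{F_G}(A) \longrightarrow \Def_{F_H}(A), \qquad F_{G,A} \longmapsto F_{H,A},
\]
where $F_{H,A}$ is the unique homomorphism over $A$ making the diagram
\[
\xymatrix{
G_A \ar[r]^{F_{G,A}} \ar[d]_{\pi_A} & G'_A \ar[d]^{\pi'_A} \\
H_A \ar[r]_{F_{H,A}} & H'_A
}
\]
commute. Existence of $F_{H,A}$ as a homomorphism of group schemes is precisely the content of Proposition~\ref{prop:descend_along_linearly_reductive}: the obstruction $\pi'_A \circ F_{G,A} \circ i$ is a deformation of the trivial homomorphism $K \to H$, and vanishes because $H^1(K,\Lie{h})=0$ by linear reductivity of $K$. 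Uniqueness follows from faithful flatness of $\pi_A$ (as a quotient by the flat kernel $K_A$), since any two liftings agree after precomposition with the epimorphism $\pi_A$. One also checks $F_{H,A}$ restricts to the relative Frobenius on the special fibre by reducing the above diagram modulo the maximal ideal and using that the Frobenius commutes with $\pi$.

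It remains to verify functoriality in $A$. Given a morphism $A' \to A$ in $\Art_{W(k)}(k)$, both $(F_{G,A'}) \otimes_{A'} A$ and $F_{G,A}$ give rise to compatible descents by the uniqueness clause above, so the induced square
\[
\xymatrix{
\Def_{F_G}(A') \ar[r] \ar[d] & \Def_{F_H}(A') \ar[d] \\
\Def_{F_G}(A) \ar[r] & \Def_{F_H}(A)
}
\]
commutes automatically. This is routine.

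The only conceptual point to be careful about is that the twisted targets $G'$ and $H'$ must match up correctly under $\pi$: the Frobenius twist is functorial in the group scheme, so $\pi'_A \colon G'_A \to H'_A$ is simply the base change of $\pi_A$ along $\sigma_A$, and the compatibility $\pi'_A \circ F_{G,A}$ vs.\ $F_{H,A} \circ \pi_A$ is exactly the statement that $\pi$ intertwines the two Frobenius liftings. I expect no genuine obstacle here beyond ensuring this compatibility is set up cleanly; the serious work has already been done in Proposition~\ref{prop:descend_along_linearly_reductive}.
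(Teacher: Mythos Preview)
Your proposal is correct and follows the same approach as the paper: the corollary is an immediate consequence of Proposition~\ref{prop:descend_along_linearly_reductive}, and the paper in fact gives no separate proof for it. You have simply unpacked the word ``functorial'' from the proposition by spelling out uniqueness (via faithful flatness of $\pi_A$) and naturality in $A$, which the paper leaves implicit; the only additional content in passing from the proposition to the corollary is that for reductive groups the lifting $\tilde G$ over $W(k)$ is unique, so the notation $\Def_{F_G}$ is unambiguous.
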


\begin{example}\label{example:multiplication_linearly_reductive}
Let $\tilde \pi \colon \SL_n \times \GG_m \to \GL_n$ be a morphism of multiplication by constant diagonal matrices.  Its kernel is isomorphic to the linearly reductive group scheme $\mu_n$, and therefore we obtain a morphism of deformation functors $\Def_{F_{\SL_n \times \GG_m,k}} \to \Def_{F_{\GL_n,k}}$.
\end{example}

The following proposition is standard, but we decided to include the proof for the sake of completeness.

\begin{prop}[Lifting along \'etale maps]
\label{prop:lifting_along_etale_maps}
Assume that $\tilde \pi \colon \tilde G \to \tilde H$ is \'etale.  Then for every $A \in \Art_{W(k)}(k)$ and a  Frobenius lifting $F_{H,A} \colon H_A \to H'_A$ there exists a functorial $F_{G,A} \colon G_A \to G'_A$ compatible with $\pi$.  In particular, there exists a natural morphism of deformation functors $\Def_{F_{\tilde H}} \to \Def_{F_{\tilde G}}$.
\end{prop}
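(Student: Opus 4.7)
The plan is to extract $F_{G,A}$ from $F_{H,A}$ by the infinitesimal lifting property of \'etale morphisms. Set $\phi_A = F_{H,A}\circ\pi_A \colon G_A \to H'_A$. Its reduction modulo $\mathfrak{m}_A$ equals $F_H\circ\pi = \pi'\circ F_G$ by naturality of the Frobenius, so the square
\[
\xymatrix{
G \ar[r]^{F_G} \ar[d] & G'_A \ar[d]^{\pi'_A} \\
G_A \ar[r]^{\phi_A} & H'_A
}
\]
commutes, where the top arrow is the relative Frobenius post-composed with $G'\hookrightarrow G'_A$, and the left vertical is the closed immersion cut out by the nilpotent ideal $\mathfrak{m}_A\cdot\cO_{G_A}$. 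Since $\tilde\pi$ is \'etale, so is its base change $\pi'_A$; the infinitesimal lifting property of \'etale morphisms then yields a unique $A$-morphism $F_{G,A}\colon G_A \to G'_A$ filling in the diagonal, whose restriction to $G$ is $F_G$ and which satisfies $\pi'_A\circ F_{G,A} = F_{H,A}\circ\pi_A$.

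It remains to verify that $F_{G,A}$ is a homomorphism of $A$-group schemes. Consider the two morphisms $\alpha,\beta\colon G_A\times_A G_A \to G'_A$ defined by $\alpha = F_{G,A}\circ m_{G_A}$ and $\beta = m_{G'_A}\circ (F_{G,A}\times F_{G,A})$. A direct computation using that $F_{H,A}$, $\pi_A$ and $\pi'_A$ are group homomorphisms shows $\pi'_A\circ\alpha = \pi'_A\circ\beta$; moreover both $\alpha$ and $\beta$ restrict to $F_G\circ m_G$ on the nilpotent thickening $G\times_k G\hookrightarrow G_A\times_A G_A$. A second application of the unique lifting property to $\pi'_A$ therefore forces $\alpha = \beta$. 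The analogous argument with the unit and inversion maps shows that $F_{G,A}$ respects these structure maps as well, so $F_{G,A}$ is a homomorphism, and is compatible with $\pi$ by construction.

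Functoriality in $A$ is a third application of uniqueness: for any morphism $A\to B$ in $\Art_{W(k)}(k)$, the base change $F_{G,A}\otimes_A B$ solves the same lifting problem over $B$ as the $F_{G,B}$ attached to $F_{H,B} = F_{H,A}\otimes_A B$, so the two coincide. This yields the desired natural transformation $\Def_{F_{\tilde H}}\to \Def_{F_{\tilde G}}$. I expect no serious obstacle here: all the content is in repeated use of the uniqueness half of the infinitesimal lifting property for \'etale morphisms, and the only care required is to keep track of which lifting problem is being solved at each step — once to produce $F_{G,A}$, once to verify the group-homomorphism property, and once to check naturality in $A$.
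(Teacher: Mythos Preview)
Your proof is correct and rests on the same core idea as the paper's: both extract $F_{G,A}$ from the unique lifting property of the \'etale morphism $\pi'_A$ over the nilpotent thickening $G \hookrightarrow G_A$. The paper packages this by forming the fiber product $G'_A \times_{H'_A,F_{H,A}} H_A$ and invoking the invariance of the \'etale site under Frobenius to identify its reduction with $G$, while you invoke the infinitesimal lifting property directly and are more explicit about checking that $F_{G,A}$ is a group homomorphism, a point the paper leaves implicit.
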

\begin{proof}
We consider the base change $G^{F_{H,A}} = G_A \times_{H_A,F_{H,A}} H_A$.  We claim it fits in the diagram
\[
\xymatrix{
  G_A \ar@{-->}[rd]\ar@/_1.2pc/[rdd]_{\pi} & & \\
   & G^{F_{H,A}} \ar[r]\ar[d]_{\pi''_A}\ar@{}[dr]|-{\square} & G'_A \ar[d]^{\pi'_A} \\
   &  H_A \ar[r]_{F_A} & H'_A,
}
\]
where the dashed arrow is a unique isomorphism compatible with morphisms $\pi$ and $\pi'$.  Indeed, by the Frobenius invariance of \'etale sites \cite[XIV=XV \S{}1 $n^\circ$2, Pr. 2(c)]{SGA5}, the diagram
\[
\xymatrix{
   G \ar[r]^{F_G}\ar[d]_{\pi} & G' \ar[d]^{\pi'} \\
   H \ar[r]_{F_H} & H',
}
\] is cartesian, and therefore the projection $\pi' \colon G^{F_{H,A}} \to H_A$ is an $A$-lifting of an \'etale map $\pi \colon G \to H$.  By the uniqueness of liftings of \'etale maps it is hence uniquely isomorphic to $G_A \to H_A$.  Consequently, the composition of the dashed isomorphism with the projection $G^{F_{H,A}} \to G_A$ is a compatible $A$-lifting of the Frobenius.
\end{proof}

\begin{cor}
For every \'etale morphism of reductive group schemes $\pi \colon G \to H$ defined over $W(k)$, we have a morphism of deformation functors $\Def_{F_{H}} \to \Def_{F_{G}}$
\end{cor}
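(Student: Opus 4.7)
The plan is to read this corollary as a direct packaging of Proposition~\ref{prop:lifting_along_etale_maps} as a natural transformation of functors $\Art_{W(k)}(k) \to \cat{Sets}$. Fix an \'etale morphism $\pi \colon G \to H$ of reductive group schemes over $W(k)$; thanks to the rigidity of reductive groups already invoked in the discussion preceding Theorem~\ref{thm:defos_frob_red_gps}, the functors $\Def_{F_G}$ and $\Def_{F_H}$ depend only on the special fibres $G$, $H$ rather than on the choice of lifting, so writing down the transformation for the chosen $\tilde G, \tilde H$ will produce the desired morphism.

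First I would define the map on objects. Given $A \in \Art_{W(k)}(k)$ and a class $F_{H,A} \in \Def_{F_H}(A)$, Proposition~\ref{prop:lifting_along_etale_maps} produces a canonical lifting $F_{G,A} \in \Def_{F_G}(A)$ as the composition of the projection $G^{F_{H,A}} = G_A \times_{H_A,F_{H,A}} H_A \to G'_A$ with the unique isomorphism $G_A \isomto G^{F_{H,A}}$ over $H_A$ reducing to the identity on the special fibre. This assignment is well defined and lands in $\Def_{F_G}(A)$ by construction.

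Second I would check naturality in $A$. Given a morphism $A \to B$ in $\Art_{W(k)}(k)$, I need the square
\[
\xymatrix{
\Def_{F_H}(A) \ar[r]\ar[d] & \Def_{F_G}(A) \ar[d] \\
\Def_{F_H}(B) \ar[r] & \Def_{F_G}(B)
}
\]
to commute, where the vertical maps are base change. This amounts to two compatibilities: that $G^{F_{H,A}} \otimes_A B \isom G^{F_{H,B}}$ (immediate from the universal property of the fibre product) and that the canonical isomorphism $G_B \isomto G^{F_{H,B}}$ agrees with the base change of the canonical isomorphism $G_A \isomto G^{F_{H,A}}$. The latter is forced by the uniqueness part of the Frobenius invariance of \'etale sites used in the proof of Proposition~\ref{prop:lifting_along_etale_maps}: both candidates are liftings of the same \'etale morphism $\pi \colon G \to H$ to $B$ that restrict to the identity on the special fibre, hence coincide.

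I do not foresee a genuine obstacle: the content of the corollary is entirely subsumed by the proposition, and the only item requiring verification is functoriality of the canonical lifting in the base ring, which rests on the standard uniqueness statement for liftings of \'etale morphisms. The whole proof can therefore be given in one line, namely ``apply Proposition~\ref{prop:lifting_along_etale_maps} to every $A \in \Art_{W(k)}(k)$; functoriality in $A$ follows from the uniqueness clause therein.''
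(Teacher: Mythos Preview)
Your proposal is correct and matches the paper's approach: the corollary is stated there without proof, as an immediate consequence of Proposition~\ref{prop:lifting_along_etale_maps}, and your one-line summary ``apply the proposition to every $A$; functoriality follows from the uniqueness clause'' is exactly the intended reading. The additional details you spell out (the fibre-product compatibility and the uniqueness of liftings of \'etale morphisms) are the right ingredients and require no further justification.
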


\begin{example}
In the setting of Example~\ref{example:multiplication_linearly_reductive}, the kernel $\mu_n$ is \'etale if and only if $n$ and $p$ are coprime.  Therefore, under this condition, there exists a morphism of deformation functors $\Def_{F_{\GL_n,k}} \to \Def_{F_{\SL_n \times \GG_m,k}}$.
\end{example}


\subsection{Frobenius homomorphism of reductive groups}
\label{sub:def_frob_reductive_groups}

Finally, we investigate the deformations of the Frobenius homomorphism of reductive groups.  As described in the introduction \S\ref{ss:intro_description_other_approach}, the proofs are based on the existence of liftable vector bundles, admitting an appropriate reduction of structure group, whose Frobenius pullback is not liftable.  More precisely, we are going to apply Proposition~\ref{prop:frobenius_liftings_and_principal_bundles} to vector bundles and principal bundles constructed from the following example of a liftable vector bundle with non-liftable Frobenius pullback.

\begin{example}
\label{example:lauritzen_rao}
In \cite{lauritzen_rao} Lauritzen and Rao provide a simple example of a $6$-dimensional variety violating Kodaira vanishing.  The construction goes as follows.   Let $V$ be a $4$-dimensional vector space over a field $k$ of characteristic $p$, and let $Y \subset \PP(V) \times \PP(V^\vee)$ be the incidence variety of hyperplanes and lines (identified with hyperplanes in the dual space).  Let $\cS$ be a rank two vector bundle on $Y$ defined as the quotient of the tautological hyperplane bundle by the tautological line bundle.  Note that $\PP(\cS)$ is the full flag variety of $V$.  The example of Lauritzen--Rao is a scheme $X$ defined as the projectivization of the Frobenius pullback of $\cS$, i.e., by the cartesian diagram:

\[
\xymatrix{
  X \isom \PP(F^*\cS) \ar[r]\ar[d] & \PP(\cS) \ar[d] \\
  Y \ar[r]_(0.45){F_Y} & Y.
  }
\]

The authors exhibit an ample line bundle $\cL$ on $X$ such that $H^5(X,\omega_X \otimes \cL) > 0$, and therefore Kodaira vanishing is not satisfied.  Observing that $\dim X = 6$ and using Deligne--Illusie results, this implies that for $p \geq 7$ the variety $X$ does not lift mod $p^2$.  Moreover, by Proposition~\ref{prop:defs_projective_bundles}, we see that the vector bundle $F^*\cS$ is not liftable over $W_2(k)$ either.  It is important to remark that the Euler characteristic satisfies $\chi(X,\omega_X \otimes \cL) > 0$ (cf. \cite[end of page 24]{lauritzen_rao}), and therefore we cannot use a semi-continuity argument to deduce that neither $X$ nor $F^*\cS$ lift to a ramified extension of $W(k)$.
\end{example}

We now proceed to the proof of our main theorem. 
\begin{thm}
\label{thm:frob_reductive_non_liftable}
Let $k$ be an algebraically closed field of characteristic $p \geq 7$, and let $G$ be a reductive group of non-exceptional type defined over $k$.  Then the Frobenius homomorphism of $G$ lifts mod $p^2$ or formally to a ramified extension of $W(k)$ if and only if $G$ is linearly reductive.
\end{thm}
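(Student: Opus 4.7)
Suppose $G$ is linearly reductive. Since $G$ is simultaneously reductive and linearly reductive over an algebraically closed field of characteristic $p > 0$, the derived subgroup of $G^\circ$ must be trivial (no non-trivial semisimple group is linearly reductive in positive characteristic), so $G^\circ$ is a torus and $\pi_0(G)$ is finite étale of order coprime to $p$. The torus $G^\circ$ lifts to a torus over $W(k)$ and its Frobenius lifts as the multiplication-by-$p$ endomorphism of the character lattice (Example~\ref{example:frobenius_multiplicative_type}). The étale component lifts uniquely and its Frobenius lifts canonically by Proposition~\ref{prop:lifting_along_etale_maps}. Assembling yields a Frobenius lift of $G$ over $W(k)$, which base-changes to every $W(k)$-algebra, establishing liftability in both senses.

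\textbf{Necessity, reduction to $\GL_2$.} Suppose now $G$ is non-exceptional and not linearly reductive, so that the derived subgroup of $G^\circ$ is a non-trivial semisimple group all of whose simple factors are of classical type. For each such factor the plan is to exhibit a principal $G$-bundle on a suitably chosen homogeneous space $Y$ lifting to $W(k)$, whose Frobenius pullback structurally contains (as a direct summand, or via a reduction of structure group) the Frobenius pullback of a rank-two vector bundle on $Y$. For $G = \GL_n$ one takes $\cS \oplus \cO^{n-2}$ on the Lauritzen--Rao variety, relying on a projector-lifting argument to recover the rank-two factor from a hypothetical lift. The classical non-$A$ cases ($\SL_n, \Sp_{2n}, \Spin_n$) are reduced to $\GL_n$ by combining the functoriality of Propositions~\ref{prop:descend_along_linearly_reductive} and~\ref{prop:lifting_along_etale_maps} with the root-$\SL_2$ subgroup attached to a long root in the defining representation.

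\textbf{Necessity, contradictions at $\GL_2$.} It then suffices to rule out a Frobenius lift of $\GL_2$ in the two claimed senses. For $A = W_2(k)$, the Lauritzen--Rao rank-two bundle $\cS$ on the incidence variety $Y$ of Example~\ref{example:lauritzen_rao} lifts to $W(k)$ as $\tilde\cH / \tilde\cL$. A hypothetical mod-$p^2$ lift of $F_{\GL_2}$ produces via Proposition~\ref{prop:frobenius_liftings_and_principal_bundles}(2) a mod-$p^2$ lift of $F^*\cS$; since $Y$ is infinitesimally rigid with $H^2(Y, \cO_Y) = 0$, Proposition~\ref{prop:defs_projective_bundles} upgrades this to a mod-$p^2$ lift of $\PP(F^*\cS)$, contradicting Lauritzen--Rao's Deligne--Illusie-based non-liftability. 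For a formal lift to a ramified extension $\hat A$ of $W(k)$, replace $(Y, \cS)$ by the Grassmannian $\Gr(2, p+2)$ and its tautological subbundle; here Totaro's original semi-continuity-of-$\chi$ argument combined with characteristic-zero Kodaira vanishing (applied to $K_X + 3A = A$) rules out formal lifts of $\PP(F^*\cS)$ to $\hat A$ independently of Theorem~\ref{thm:defos_frob_red_gps}, and Corollary~\ref{cor:frobenius_liftings_and_principal_bundles_formally} together with Proposition~\ref{prop:defs_projective_bundles} close the loop.

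\textbf{Main obstacle.} The most delicate point is the reduction step: extracting a lift of the rank-two sub-bundle $F^*\cS$ from a hypothetical lift of the larger $G$-bundle requires lifting the idempotent/projector cutting out that factor at the special fibre, whose obstructions are cohomological and must be controlled using specific properties of the base scheme (rational connectedness, vanishing of low-degree cohomology of endomorphism bundles). Once this is achieved, the remainder of the proof is a careful packaging of the previously established functoriality results together with Proposition~\ref{prop:frobenius_liftings_and_principal_bundles}.
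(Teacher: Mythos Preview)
Your overall architecture is right: sufficiency via ``linearly reductive $\Rightarrow$ multiplicative type'' and the explicit Frobenius lift on tori, necessity via Proposition~\ref{prop:frobenius_liftings_and_principal_bundles} applied to the Lauritzen--Rao bundle (mod $p^2$) or Totaro's bundle (ramified extension). Your treatment of the $\GL_2$ base case and the use of Proposition~\ref{prop:defs_projective_bundles} to pass between $F^*\cS$ and $\PP(F^*\cS)$ matches the paper.

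The genuine gap is in your reduction from types $B_n$, $C_n$, $D_n$ to $\GL_n$ (and from $\GL_n$ to $\GL_2$) via ``the root-$\SL_2$ subgroup attached to a long root.'' Propositions~\ref{prop:descend_along_linearly_reductive} and~\ref{prop:lifting_along_etale_maps} both concern \emph{surjective} homomorphisms $\tilde G \to \tilde H$; they transfer Frobenius lifts along quotients, not along inclusions of subgroups. A root-$\SL_2$ is a subgroup, and a Frobenius lift $F_{G,A}$ need not carry $\SL_{2,A}$ into $\SL'_{2,A}$, so there is no mechanism here to restrict the hypothetical lift. Likewise there is no surjection with linearly reductive kernel from $\Sp_{2n}$ or $\SO_m$ onto any $\GL_n$, so the functoriality propositions do not give the reduction you claim.

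The paper avoids this entirely by staying at each Dynkin type: for $A_n$ it builds $\cS_n = \cS \oplus \cO_Y(N)^{\oplus n-2}$ (note the twist, which you omit), and for $B_n$, $C_n$, $D_n$ it builds $\cS_n(N) \oplus \cS_n^\vee(-N) \oplus \cO_Y$ (resp.\ the even-rank analogues) equipped with an explicit non-degenerate symmetric or alternating form, so that Proposition~\ref{prop:frobenius_liftings_and_principal_bundles}(2) applies \emph{directly} to $\SO_{2n+1}$, $\Sp_{2n}$, $\SO_{2n}$ via their defining representations. The transfer Propositions~\ref{prop:descend_along_linearly_reductive} and~\ref{prop:lifting_along_etale_maps} are then used only to pass between representatives of the same Dynkin diagram (e.g.\ from an arbitrary group of type $A_n$ to $\PGL_{n+1}$, or from $\SO_m$ to its adjoint form, which is legitimate since $p$ exceeds the index). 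Your ``main obstacle''---recovering a lift of $F^*\cS$ from a lift of the larger bundle---is made precise in the paper as Lemma~\ref{lem:lemma_quot_schemes}: the obstruction to lifting the projection lives in $\Ext^1(\cE'(-N'),\cE(N))$, and the twist by a large power of an ample line bundle kills it via Serre vanishing. Without that twist your projector-lifting argument has no reason to succeed.
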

\begin{proof}
First, we prove that all linearly reductive groups admit a lifting of the Frobenius homomorphism.  For this purpose, we observe that by the result of Nagata (see \cite{nagata}) linearly reductive groups are of multiplicative type, and by Example~\ref{example:frobenius_multiplicative_type} those clearly admit Frobenius lifting over any base.  

For the converse implication, we first assume that $G$ is an almost-simple semisimple group of non-exceptional type and prove that it does not admit a lifting of the Frobenius homomorphism mod $p^2$.  By \cite[Chapter 22]{milne_algebraic_groups} such groups are classified up to a central isogeny (surjective homomomorphism with finite central kernel) by irreducible Dynkin diagrams, and therefore we conduct a case by case analysis with respect to the type of the diagram.  For the sake of clarity, we first treat the case of type $A_1$.  All groups of this type admit a multiplicative isogeny onto an adjoint group, which is isomorphic to $\PGL_{2,k}$.  By Proposition~\ref{prop:descend_along_linearly_reductive} it therefore suffices to prove that $\PGL_{2,k}$ does not admit a lifting of the Frobenius homomorphism mod $p^2$.  In order to see this, we apply Proposition~\ref{prop:frobenius_liftings_and_principal_bundles} for the principal bundle associated with the $\PP^1$-bundle $\PP(\cS) \to Y$ from Example~\ref{example:lauritzen_rao}, whose Frobenius pullback does not lift mod $p^2$.  In order to generalize the result we use the following

\begin{lemma}
\label{lem:lemma_quot_schemes}
Let $\tilde X$ be a projective scheme over $W(k)$, and let $\cE$ and $\cE'$ be two vector bundles on $X = \tilde X \otimes_{W(k)} k$.  Suppose $\cO_{\tilde X}(1)$ is a relatively ample line bundle on $\tilde X$.  Then for every pair of integers $N$ and $N'$ such that $N + N'$ is sufficiently large and every $A \in \Art_{W(k)}(k)$, a lifting $\cF_A$ of $\cF = \cE(N)\oplus \cE'(-N')$ over $X_A$ induces potentially non-unique deformations of both $\cE$ and $\cE'$.
\end{lemma}
\begin{proof}[Proof of Lemma~\ref{lem:lemma_quot_schemes}]
For the proof we use the obstruction theory for the Quot functor given in \cite[Lemma 2.2.6]{huybrechts_lehn}.  Let $\pi \colon F \to \cE(N)$ be the natural projection map with kernel $\cE'(-N')$.  Given the lifting $\cF_A$ we can consider the functor ${\rm Quot}^{\cF_A}_{X_A/A}$ parametrizing quotients of $\cF_A$.  The map $\pi$ is a $k$-point of this functor.  Using the obstruction theory in \emph{loc. cit} we see that $\pi$ can be extended to a projection $\pi_A \colon \cF_A \to \cQ$ if the group $\Ext^1(\cE'(-N',\cE(N))$ is zero.  This is clearly true for $N+N'$ sufficiently large by Serre vanishing, and hence an extension $\pi_A$ exists (not necessarily unique).  It is easy to see that $\cQ$ is a deformation of $\cE(N)$ and $\Ker \pi_A$ is a deformation of $\cE'(-N')$.  Twisting by appropriate powers of the given ample line bundle we obtain necessary deformations.
\end{proof}

Equipped with the above, we are ready to proceed to other types of Dynkin diagrams.  We begin with the case of $A_n$.  We consider the variety $Y$ from Example~\ref{example:lauritzen_rao} with its natural projective $W(k)$-lifting $\tilde Y$ and the associated ample line bundle $\cO_{\tilde Y}(1)$.  By Lemma~\ref{lem:lemma_quot_schemes} for every $n \geq 2$ there exists an integer $N$ such that the vector bundle $\cS_{n} = \cS \oplus \cO_Y(N)^{\oplus n-2}$ is $p^2$ liftable but its Frobenius pullback is not.  By Proposition~\ref{prop:defs_projective_bundles} we see that the associated projective bundle satisfies analogous properties and therefore applying Proposition~\ref{prop:frobenius_liftings_and_principal_bundles} we see that $\PGL_{n,k}$ does not admit a lifting of the Frobenius homomorphism.  This settles the case of $A_n$ since $\PGL_{n,k}$ is a simple representative \cite[Chapter 22]{milne_algebraic_groups} of the respective Dynkin diagram, i.e., admits an multiplicative isogeny from any other semisimple group of this Dynkin diagram.

For Dynkin diagrams of types $B_n$, $C_n$ and $D_n$ we utilize a similar technique.  More precisely, for $B_n$ we reason as follows.  First, we recall that $\SO_{2n+1}$ is a non-simple representative of this Dynkin diagram.  By Proposition~\ref{prop:frobenius_liftings_and_principal_bundles}(2), in order to prove that $\SO_{2n+1}$ does not admit a lifting of the Frobenius homomorphism it suffices to exhibit a vector bundle of rank $2n+1$ which admits a non-degenerate bilinear pairing, is liftable, but its Frobenius pullback is not.  We claim that for $N$ large enough the bundle $\cS^{\rm sym}_n = \cS_n(N) \oplus \cS_n^\vee(-N) \oplus \cO_Y$ on $Y$ is appropriate.  Indeed, by Lemma~\ref{lem:lemma_quot_schemes}, we see that for $N$ large enough the vector bundle $\cS^{\rm sym}_n$ is liftable mod $p^2$ but its Frobenius pullback is not.  Moreover, the pairing $\mu \colon \cS^{\rm sym}_n \otimes \cS^{\rm sym}_n \to \cO_Y$ given by the matrix

\[
    \kbordermatrix{ & \cS_n(N) & \cS_n^{\vee}(-N) & \cO_Y \\
      \cS_n(N) & 0 & {\rm id}_{\cS_n^{\vee}(-N)} & 0 \\
      \cS_n^{\vee}(-N) & {\rm id}_{\cS_n(N)} & 0 & 0 \\
      \cO_Y & 0 & 0 & {\rm id}_{\cO_Y} }
\]
is non-degenerate.  In order to descend non-liftability to a simple representative, we recall that the index of the Dynkin diagram $B_n$ is smaller than $p$ and therefore the natural isogeny $\SO_{2n+1} \to \SO_{2n+1}^{\rm ad}$ is \'etale and therefore we may apply Proposition~\ref{prop:lifting_along_etale_maps}. 
 
The cases of $C_n$ and $D_n$ are settled analogously by substituting $\SO_{2n+1}$ and its natural representation with $\Sp_{2n}$ and $\SO_{2n}$.  Note that again the indices of those Dynkin diagrams are smaller than $p$ and therefore we may apply Proposition~\ref{prop:lifting_along_etale_maps} without any reservations.  We leave the case of a general reductive group of non-exceptional type as a simple exercise in application of Proposition~\ref{prop:descend_along_linearly_reductive} and Proposition~\ref{prop:lifting_along_etale_maps} based on the classification of reductive groups.


The part of the theorem concerning formal liftability of Frobenius homomorphism of reductive groups over a ramified extension of $W(k)$ is proven analogously.  However, instead of Example~\ref{example:lauritzen_rao} and Proposition~\ref{prop:frobenius_liftings_and_principal_bundles}, we use the projective bundles constructed by Totaro (described in \S\ref{ss:intro_examples_totaro}) and Corollary~\ref{cor:frobenius_liftings_and_principal_bundles_formally}.  To justify this change in the argument, we recall that unlike Lauritzen--Rao examples, those of Totaro clearly do not deform to characteristic zero since they admit a liftable ample line bundle of negative Euler characteristic of a twist by the canonical bundle (cf. end of Example~\ref{example:lauritzen_rao}).
\end{proof}


\bibliographystyle{amsalpha} 
\bibliography{bib.bib}

\renewcommand{\refname}{\rule{2cm}{0.4pt}}
\renewcommand{\addcontentsline}[3]{}


\end{document}